\newtheorem{theorem}[subsection]{Theorem}
\newtheorem{lemma}[subsection]{Lemma}
\newtheorem{prop}[subsection]{Proposition}
\newtheorem{corollary}[subsection]{Corollary}
\newtheorem{thm}{Theorem}
\theoremstyle{definition}
\newtheorem{remark}[subsection]{Remark}
\newtheorem{example}[subsection]{Example}
\newtheorem*{ack}{Acknowledgments}
\numberwithin{equation}{section}
\newcommand{\cH}{{\mathfrak h}}  
\newcommand{\cK}{{\mathfrak k}} 
\newcommand{\ccG}{{\mathcal G}}
\newcommand{\ccB}{{\mathcal B}}
\newcommand{\M}{{\mathfrak g}}  
\newcommand{\V}{{\mathcal V}}
\newcommand{\R}{{\mathcal R}}
\newcommand{\TT}{{\mathcal T}}
\newcommand{\cG}{\mathfrak {g}}
\newcommand{\fp}{\mathfrak {p}}
\newcommand{\fq}{\mathfrak {q}}
\newcommand{\bb}{\mathfrak {b}}
\renewcommand{\sp}{\mathfrak {sp}}
\renewcommand{\sl}{\mathfrak {sl}}
\newcommand{\oG}{\overline{G}}
\newcommand{\kk}{\kappa}
\newcommand{\T}{\mathbb{T}}
\newcommand{\Z}{\mathbb{Z}}
\newcommand{\Q}{\mathbb{Q}}
\newcommand{\C}{\mathbb{C}}
\newcommand{\bL}{\mathbb{L}}
\newcommand{\G}{\Gamma}
\def\Sp{\mathrm{Sp}}
\def\dot{\bullet}
\DeclareMathOperator{\Spec}{Specm}
\DeclareMathOperator{\spec}{Spec}
\DeclareMathOperator{\Hom}{Hom}
\DeclareMathOperator{\rank}{rank}
\DeclareMathOperator{\im}{im}
\DeclareMathOperator{\coker}{coker}
\DeclareMathOperator{\id}{id}
\DeclareMathOperator{\supp}{supp}
\DeclareMathOperator{\gr}{gr}
\DeclareMathOperator{\ab}{ab}
\DeclareMathOperator{\ann}{ann}
\DeclareMathOperator{\abf}{abf}
\DeclareMathOperator{\ad}{ad}
\DeclareMathOperator{\Sym}{Sym}
\DeclareMathOperator{\Aut}{Aut}
\newcommand{\surj}{\twoheadrightarrow}
\newcommand{\inj}{\hookrightarrow}
\newcommand{\isom}{\xrightarrow{\,\simeq\,}}
\newcommand{\eqv}{{\Longleftrightarrow}}
\date{May 4, 2012}
\begin{document}

\title [The abelianization of the Johnson kernel]
{The abelianization of the Johnson kernel}

\author[Alexandru Dimca]{Alexandru Dimca$^1$}
\address{Institut Universitaire de France et Laboratoire J.A. Dieudonn\'e, UMR
                              du CNRS 7351,
                 Universit\'e de Nice Sophia-Antipolis,
                 Parc Valrose,
                 06108 Nice Cedex 02,
                 France}
\email{dimca@unice.fr}

\author[Richard Hain]{Richard Hain$^2$}
\address{Department of Mathematics, Duke University, Durham,
NC 27708-0320, USA}
\email{hain@math.duke.edu}

\author[Stefan Papadima]{Stefan Papadima$^3$}
\address{Simion Stoilow Institute of Mathematics, 
P.O. Box 1-764,
RO-014700 Bucharest, Romania}
\email{Stefan.Papadima@imar.ro}

\thanks{$^1$ Partially supported by ANR-08-BLAN-0317-02 (SEDIGA)} 
\thanks{$^2$ Supported in part by grant DMS-1005675 from the National Science
Foundation.}
\thanks{$^3$ Partially supported by CNCSIS-UEFISCSU project PNII-IDEI 1189/2008}

\subjclass[2000]{Primary 20F34, 57N05; Secondary 16W80, 20F40, 55N25.}

\keywords{Torelli group, Johnson kernel, Malcev completion, $I$-adic completion,
characteristic variety, support, nilpotent module, arithmetic group, associated
graded Lie algebra, infinitesimal Alexander invariant}

\begin{abstract}
We prove that the first complex homology of the Johnson subgroup of the Torelli
group $T_g$ is a non-trivial, unipotent $T_g$-module for all $g\ge 4$ and give
an explicit presentation of it as a $\Sym_\dot H_1(T_g,\C)$-module when $g\ge
6$. We do this by proving that, for a finitely generated group $G$ satisfying an
assumption close to formality, the triviality of the restricted characteristic
variety implies that the first homology of its Johnson kernel $K$ is a nilpotent
module over the corresponding Laurent polynomial ring, isomorphic to the
infinitesimal Alexander invariant of the associated graded Lie algebra of $G$.
In this setup, we also obtain a precise nilpotence test.
\end{abstract}

\maketitle

\tableofcontents

\section{Introduction} \label{sec:intro}

Fix a closed oriented surface $\Sigma$ of genus $g \ge 2$. The genus $g$ mapping
class group $\G_g$ is defined to be the group of isotopy classes of orientation
preserving diffeomorphisms of $\Sigma$. For a commutative ring $R$, denote
$H_1(\Sigma,R)$ by $H_R$. The intersection pairing $\theta : H_R\otimes H_R \to
R$ is a unimodular, skew-symmetric bilinear form. Set $\Sp(H_R) =
\Aut(H_R,\theta)$. The action of $\G_g$ on $\Sigma$ induces a surjective
homomorphism $r : \G_g \to \Sp(H_\Z)$. 
The Torelli group $T_g$ is defined to be the kernel of $r$. One thus has
the extension
\begin{equation}
\label{eq:deft}
\xymatrix{
1 \ar[r] & T_g \ar[r] & \G_g \ar[r]^(.4)r & \Sp(H_\Z) \ar[r] & 1.
}
\end{equation}
Dennis Johnson \cite{J1} proved that $T_g$ is finitely generated when $g\ge 3$.

The intersection form $\theta$ spans a copy of the trivial representation in 
$\wedge^2 H_R$. One therefore has the
$\Sp(H_R)$-module
$$
V_R := (\wedge^3 H_R)/(\theta\wedge H_R)
$$
which is torsion free as an $R$-module for all $R$. 

Johnson \cite{J0} constructed a surjective morphism (the ``Johnson
homomorphism'') $\tau : T_g \to V_\Z$ 
and proved in \cite{J3} that it induces an $\Sp(H_\Z)$-module isomorphism
$$
\bar{\tau}: H_1(T_g)/(2\text{-torsion}) \to V_\Z \, .
$$
The {\em Johnson group} $K_g$ is the kernel of $\tau$. By a fundamental result
of Johnson \cite{J2}, it is the subgroup of $\G_g$ generated by Dehn twists on
separating simple closed curves.

The goal of this paper is to describe the $\G_g/K_g$-module $H_1(K_g,\C)$. The
first and third authors \cite{DP} proved that $H_1(K_g,\C)$ is finite
dimensional whenever $g\ge 4$. Our first result is:

\begin{thm}
\label{thm:main1}
If $g\ge 4$, then $H_1(K_g,\C)$ is a non-trivial, unipotent $H_1(T_g)$-module
and $H_1(T_g,\C_\rho)$ vanishes for all non-trivial characters $\rho$ in the
identity component $\Hom_\Z(V_\Z,\C^\ast)$ of $H^1(T_g,\C^\ast)$.
\end{thm}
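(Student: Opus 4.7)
The plan is to reduce everything to a statement about the characteristic variety of $T_g$ near the trivial character. Since $K_g$ is the kernel of the surjection $\tau:T_g\to V_\Z$, conjugation makes $H_1(K_g,\C)$ a module over the group algebra $\Lambda:=\C[V_\Z]$, a Laurent polynomial ring. By \cite{DP}, this module is finite dimensional over $\C$, so its $\Lambda$-support is a finite subset of the algebraic torus $\Hom(V_\Z,\C^\ast)$, which (via the transpose of $\tau$) is the identity component of $H^1(T_g,\C^\ast)$. Unipotence of $H_1(K_g,\C)$ as an $H_1(T_g)$-module is equivalent to this support being contained in $\{1\}$.

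Next I would set up the dictionary linking this support to twisted homology. For $\rho\ne 1$ in the identity component, the five-term exact sequence for the extension $1\to K_g\to T_g\to V_\Z\to 1$, combined with the vanishing $H_\ast(V_\Z,\C_\rho)=0$ for $\rho\ne 1$, yields an isomorphism $H_1(T_g,\C_\rho)\cong (H_1(K_g,\C)\otimes\C_\rho)_{V_\Z}$. Consequently $\rho^{-1}$ lies in the $\Lambda$-support of $H_1(K_g,\C)$ if and only if $H_1(T_g,\C_\rho)\ne 0$, and the two conclusions of the theorem --- unipotence of $H_1(K_g,\C)$ and vanishing of $H_1(T_g,\C_\rho)$ --- become two faces of the same statement: the restricted characteristic variety of $T_g$ reduces to $\{1\}$.

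To prove this triviality I would invoke a tangent-cone / formality principle: for finitely generated groups satisfying the ``close to formality'' condition advertised in the abstract, the characteristic variety agrees, as an analytic germ at $1$, with the exponential of the linear resonance variety $\R^1\subseteq H^1(T_g,\C)$ cut out by the cup product into $H^2(T_g,\C)$. Coupled with the fact that finite-dimensionality of $H_1(K_g,\C)$ already confines the restricted characteristic variety to finitely many points, this reduces everything to showing $\R^1(T_g)=0$, and Hain's presentation of the Malcev Lie algebra of $T_g$ and its quadratic dual provides enough control of cup products to conclude this. Non-triviality of $H_1(K_g,\C)$ would then follow from its identification (also promised in the abstract) with the infinitesimal Alexander invariant of $\gr T_g$, which is non-zero because the quadratic part of $\gr T_g$ is non-trivial.

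The main obstacle is the formality input at $g=4,5$: one must identify a partial-formality notion weak enough to hold at these small genera but strong enough to drive the tangent-cone comparison, since Hain's full $1$-formality result is only available for $g\ge 6$. Once that framework is set up, verifying $\R^1(T_g)=0$ and the non-vanishing of the infinitesimal Alexander invariant should both be essentially formal consequences of Hain's explicit description of $\gr T_g$.
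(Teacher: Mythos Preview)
Your setup is correct: the dictionary between unipotence of $H_1(K_g,\C)$, the support of this module in $\T^0(T_g)$, and the vanishing of $H_1(T_g,\C_\rho)$ for $\rho\ne 1$ is exactly right, and this is how the paper frames things too. The problem is the step where you pass from finiteness of $\V(T_g)$ to triviality via the tangent-cone/resonance comparison. That comparison is purely local: it identifies the analytic germ of $\V(T_g)$ at $1$ with the germ of $\R^1(T_g)$ at $0$. If $\V(T_g)$ is already known to be finite, then $1$ is automatically an isolated point, and the tangent-cone theorem tells you nothing beyond $\R^1(T_g)=\{0\}$ as a germ --- it gives no information whatsoever about the other isolated points of $\V(T_g)$ away from $1$. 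So proving $\R^1(T_g)=0$ does not reduce the problem at all; you still have to rule out nontrivial characters in $\V(T_g)$ by some global argument.

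The paper's route is quite different and genuinely global. First, it sharpens the finiteness result of \cite{DP} to show that $\V(T_g)$ consists only of \emph{torsion} characters: one uses that $\V(T_g)$ is $\Sp_g(\Z)$-invariant, that the stabilizer of any point has finite index and is therefore Zariski dense in $\Sp_g(\C)$ by Borel density, and that a nontorsion point would produce a nonzero $\Sp_g(\C)$-fixed subspace of $V_\C^\ast$, contradicting irreducibility. Second, to kill the torsion, the paper invokes Putman's theorem that every finite-index subgroup of $T_g$ containing $K_g$ has the same first Betti number as $T_g$. Pulling back along the $m$-power map $\iota_m$ on $V_\Z$ produces such a subgroup $G(m)$ with the same Johnson kernel $K_g$, and a completion argument shows that the $\C V_\Z$-module $H_1(K_g,\C)$, restricted via $\C\iota_m$, is nilpotent; this forces the $I$-adic completion map on $H_1(K_g,\C)$ to be injective, hence $H_1(K_g,\C)$ itself is nilpotent, i.e.\ $\V(T_g)=\{1\}$. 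Neither the arithmetic-group step nor Putman's input appears in your outline, and there is no way to replace them by a resonance computation. Your non-triviality argument is essentially the paper's: triviality would force $\cG_3(T_g)=0$, hence $\cG_\bullet(T_g)$ finite-dimensional, contradicting Hain.
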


When $g\ge 6$ we find a presentation of $H_1(K_g,\C)$ as a $\G_g/K_g$-module.
Describing this module structure requires some preparation.

Suppose that $g\ge 3$. Denote the highest weight summand of the second symmetric
power of  the $\Sp(H_\C)$-module $\wedge^2 H_\C$ by $Q$.\footnote{If
$\lambda_1,\dots,\lambda_g$ is a set of fundamental weights of $\Sp(H_\C)$, then
$Q$ is the irreducible module with highest weight $2\lambda_2$. Alternatively,
it is the irreducible module corresponding to the partition $[2,2]$.} There is a
unique $\Sp(H_\C)$-module projection (up to multiplication by a non-zero
scalar), $\pi : \wedge^2 V_\C \surj Q$.

Define a left $\Sym_\dot(V_\C)$-module homomorphism
$$
q: \Sym_\dot(V_\C)\otimes\wedge^3 V_\C \to \Sym_\dot(V_\C)\otimes Q
$$
by
$$
q(f\otimes (a_0\wedge a_1 \wedge a_2)) = \sum_{i\in \Z/3}
f\cdot a_i\otimes \pi(a_{i+1}\wedge a_{i+2}) \, .
$$
The map $q$ is $\Sp(H_\C)$-equivariant. Thus, the cokernel of $q$ is both a
$\Sp(H_\C)$-module and a graded $\Sym_\dot(V_\C)$-module. We show in Section
\ref{sec:infalex} that $\coker (q)$ is finite-dimensional, when $g\ge 6$. It
follows that $\coker (q)$ is a $(\Sp(H_\C)\ltimes V_\C)$-module, where $v\in
V_\C$ acts via its exponential $\exp v$. One therefore has the
$(\Sp(H_\C)\ltimes V_\C)$-module
\begin{equation}
\label{eq:presintro}
M := \C \oplus \coker (q) \, ,
\end{equation}
where $\C$ denotes the trivial module.

To relate the $(\Sp(H_\C)\ltimes V_\C)$-action on $M$ to the $\G_g/K_g$-action
on $H_1(K_g, \C)$, we recall that Morita \cite{Mo2} has shown that there is a
Zariski dense embedding $\G_g/K_g \hookrightarrow \Sp(H_\C)\ltimes V_\C$, unique
up to conjugation by an element of $V_\C$, such that the diagram
$$
\xymatrix{
1 \ar[r] & T_g/K_g \ar[r]\ar[d]_{\bar{\tau}} & \G_g/K_g \ar[r]\ar[d] &
\Sp(H_\Z)\ar[d] \ar[r] & 1 \cr 
1 \ar[r] & V_\C \ar[r] & \Sp(H_\C)\ltimes V_\C \ar[r] & \Sp(H_\C) \ar[r] & 1
}
$$
commutes.

\begin{thm}
\label{thm:main2}
If $g \ge 6$, then there is an isomorphism $H_1(K_g,\C) \cong M$ which is
equivariant with respect to a suitable choice of the Zariski dense homomorphism
$\G_g/K_g \to \Sp(H_\C)\ltimes V_\C$ described above.
\end{thm}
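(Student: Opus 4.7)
The plan is to apply the general machinery developed earlier in the paper to the case $G=T_g$, and then invoke Hain's explicit quadratic presentation of the associated graded Lie algebra of the Torelli group in order to compute the resulting infinitesimal Alexander invariant and match it with $M$.

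First I would verify the two hypotheses needed to apply the abstract theorem to $T_g$: (1) the ``almost formality'' assumption, which for $g\ge 6$ follows from Hain's results on the Malcev Lie algebra of $T_g$; and (2) the triviality of the restricted characteristic variety, which is the content of Theorem~\ref{thm:main1} combined with the finite-dimensionality of $H_1(K_g,\C)$ established in \cite{DP}. Granting these, the abstract theorem identifies $H_1(K_g,\C)$, as a module over the Laurent polynomial ring $\C[V_\Z]$, with the direct sum $\C\oplus B(\gr_\dot T_g\otimes \C)$, where $B$ denotes the infinitesimal Alexander invariant of the associated graded Lie algebra of $T_g$.

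The next step is to input Hain's quadratic presentation of $\gr_\dot T_g\otimes \C$ for $g\ge 6$: it expresses $\gr_\dot T_g\otimes \C$ as $L(V_\C)/\langle R\rangle$, where $R\subset \wedge^2 V_\C$ is the kernel of the $\Sp(H_\C)$-equivariant surjection $\pi:\wedge^2 V_\C\surj Q$. For any quadratic Lie algebra of this form, the standard Koszul-type presentation writes $B(L(V_\C)/\langle R\rangle)$ as the cokernel of the composition
$$
\Sym_\dot(V_\C)\otimes \wedge^3 V_\C \xrightarrow{\kappa}\Sym_\dot(V_\C)\otimes \wedge^2 V_\C \xrightarrow{1\otimes \pi}\Sym_\dot(V_\C)\otimes Q,
$$
with $\kappa$ the Koszul differential sending $f\otimes(a_0\wedge a_1\wedge a_2)$ to $\sum_{i\in\Z/3} f\cdot a_i\otimes (a_{i+1}\wedge a_{i+2})$. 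This composition coincides with the map $q$ in the statement, so $B(\gr_\dot T_g\otimes \C)\cong \coker(q)$. Together with the previous step, this yields an isomorphism $H_1(K_g,\C)\cong M$ of $\Sym_\dot(V_\C)$-modules.

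Finally I would promote this isomorphism to a $\G_g/K_g$-equivariant one. The $\Sp(H_\Z)$-equivariance is automatic, because Hain's presentation, the Koszul differential, and the projection $\pi$ are all $\Sp(H_\C)$-equivariant. The action of the unipotent part $V_\C$ on $H_1(K_g,\C)$ is, by Theorem~\ref{thm:main1}, the exponential of a nilpotent endomorphism coming from the $V_\C$-module structure, and this matches tautologically with the $V_\C$-action on $M$. Since Morita's embedding $\G_g/K_g\inj \Sp(H_\C)\ltimes V_\C$ is determined only up to conjugation by an element of $V_\C$, one can normalize it so that the two actions align. The main obstacle will be verifying that the ``almost formality'' hypothesis of the abstract theorem genuinely applies in the range $g\ge 6$ (and not in a smaller range), together with careful bookkeeping of the $\C$-summand and of the $V_\C$-conjugation ambiguity, so that the resulting isomorphism is equivariant for the full semidirect product and not merely for its $\Sp(H_\C)$-quotient.
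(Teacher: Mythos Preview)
Your overall strategy matches the paper's, but there is a genuine gap in how you handle the trivial $\C$ summand of $M$, and it stems from a misreading of Hain's presentation.

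You assert that the abstract Theorem~\ref{thm:gral} yields $H_1(K_g,\C)\cong \C\oplus \bb_\dot(T_g)$. It does not: the theorem gives exactly $H_1(K_g,\C)\cong \bb_\dot(T_g)$, with no extra summand. You then claim that in Hain's quadratic presentation $\cG_\dot(T_g)=\bL(V_\C)/\langle R\rangle$ the relation space $R$ equals $\ker(\pi:\wedge^2 V_\C\surj Q)$. This is also false: the $\Sp$-decomposition is $\wedge^2 V_\C = R\oplus V(2\lambda_2)\oplus V(0)$, so $\ker(\pi)=R\oplus V(0)=:\tilde R$ strictly contains $R$. Your two errors happen to cancel numerically, but the argument as written does not prove the result.

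What the paper actually does is introduce the auxiliary quadratic Lie algebra $\cK_\dot=\bL(V_\C)/\langle\tilde R\rangle$ and the natural surjection $f:\cG_\dot(T_g)\surj\cK_\dot$. The Koszul-type presentation (Lemma~\ref{lem:simpl}) applied to $\cK$ gives $\bb_\dot(\cK)=\coker(q)$, exactly as you compute. The missing step is Lemma~\ref{lem:tildesimpl}: the induced map $\bb_\dot(f)$ is surjective with one-dimensional kernel $\C\cdot z$, where $z$ spans the copy of $V(0)$ in $\cG_2$. The crucial input here is that $z$ is \emph{central} in $\cG_\dot(T_g)$ (a nontrivial fact from \cite{H}), which forces $\ad_{v_1}\cdots\ad_{v_q}(z)=0$ in $\cG$ for $q\ge 1$ and hence makes $\C\cdot z$ a trivial $\Sym(V_\C)$-summand of $\bb_\dot(\cG)$. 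This is precisely the origin of the $\C$ in $M=\C\oplus\coker(q)$.

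Your equivariance sketch is also too light. The paper does not simply ``normalize'' Morita's embedding; it passes through the relative completion $\R(\G_g)$ and uses the mixed Hodge structure on $\cG(T_g)$ and on the Lie algebra of $\R(\G_g)$ to obtain \emph{compatible} splittings of the lower central series and of the extension $1\to V_\C\to\R(\G_g)/\im\ccG(T_g)'\to\Sp_g(\C)\to 1$. Without this Hodge-theoretic compatibility, there is no a priori reason the $\Sp$-splitting of $\bb_\dot(T_g)$ and the $V_\C$-action induced from $\G_g/K_g$ should align.
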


\subsection{Relative completion}
\label{ss10}

These results are proved using the {\em infinitesimal Alexander invariant}
introduced in \cite{PS0} and the {\em relative completion} of mapping class
groups from \cite{H}. Alexander invariants occur as $K_g$ contains the
commutator subgroup $T_g'$ of $T_g$ and $K_g/T_g'$ is a finite vector space over
$\Z/2\Z$. So one would expect $H_1(K_g,\C)$ to be closely related to the
complexified Alexander invariant $H_1(T_g',\C)$ of $T_g$. A second step is to
replace $T_g$ by its Malcev (i.e., unipotent) completion, and $K_g$ by the
derived subgroup of the unipotent completion of $T_g$. These groups, in turn,
are replaced by their Lie algebras. The resulting module is the infinitesimal
Alexander invariant of $T_g$. 

The role of relative completion of mapping class groups is that it allows, via
Hodge theory, to identify filtered invariants, such as $H_1(T_g', \C)$, with
their associated graded modules which are, in general, more amenable to
computation. For example, the lower central series of $T_g$ induces via
conjugation a filtration on $H_1(T_g', \C)$, whose first graded piece is
identified in \cite{H} with $V(2\lambda_2) \oplus \C$, over $\Sp(H_\C)$.

\subsection{Alexander invariants}

The classical Alexander invariant of a group $G$ is the abelianization
$G_{\ab}'$ of its derived subgroup $G':=[G,G]$. Conjugation by $G$ endows it
with the structure of a module over the integral group ring $\Z G_{\ab}$
of the abelianization of $G$. More generally, if $N$ is a normal subgroup of $G$
that contains $G'$, then one has the $\C G_{\ab}$-module
$N_{\ab}\otimes \C = H_1(N,\C)$. Our primary example is where $G=T_g$ and $N$ is
its Johnson subgroup $K_g$.

There is an infinitesimal analog of the Alexander invariant. It is obtained by
replacing the group $G$ by its (complex) Malcev completion $\ccG(G)$ (also known
as its unipotent completion). The Malcev completion of $G$ is a prounipotent
group, and is thus determined by its Lie algebra $\cG(G)$ via the exponential
mapping $\exp : \cG(G) \to \ccG(G)$, which is a bijection. (Cf.\ \cite[Appendix
A]{Q}.) The first version of the infinitesimal Alexander invariant of $G$ is the
abelianization $\ccB(G) := \ccG(G)'_{\ab}$ of the derived subgroup
$\ccG(G)'=[\ccG(G),\ccG(G)]$ of $\ccG(G)$. One also has the abelianization
$\bb(G)$ of the derived subalgebra $\cG(G)'= [\cG(G),\cG(G)]$ of $\cG(G)$. The
exponential mapping induces an  isomorphism $\bb(G) \to \ccB(G)$. When $N/G'$ is
finite, one has the diagram
\begin{equation}
\label{eqn:comparisons}
\xymatrix{
N_{\ab}\otimes\C \ar[r] & \ccB(G) & \bb(G) \ar[l]_\simeq^\exp 
}
\end{equation}
where the left most map is induced
by the homomorphism $N \to \ccG(G)'$.

The next step is to replace the Alexander invariant of $\cG(G)$ by a graded
module by means of the lower central series. Recall the lower central
series of a group $G$,
$$
G=G^1 \supseteq G^2 \supseteq G^3 \supseteq \cdots
$$
It is defined by $G^{q+1} = [G,G^q]$. One also has the lower central series 
$\{\cG(G)^q\}_{q\ge 1}$ of its Malcev Lie algebra. There is a natural graded Lie
algebra isomorphism
$$
\cG_\dot(G) :=\bigoplus_{q\ge 1} (G^q/G^{q+1})\otimes \C
\stackrel{\simeq}{\longrightarrow}
\bigoplus_{q\ge 1} \cG(G)^q/\cG(G)^{q+1},
$$
where the bracket of the left-hand side is induced by the commutator of $G$.

The {\em infinitesimal Alexander invariant} $\bb_{\bullet}(G)$ of $G$, as
introduced in \cite{PS0}, is the Alexander invariant of this graded Lie algebra,
with a degree shift by $2$:\footnote{That is, as a graded vector space,
$\bb_{q}(G) = \cG_{q+2} (G)'_{\ab}$, for $q\ge 0$.}
$$
\bb_{\bullet}(G) := \cG_{\bullet} (G)'_{\ab} [2]\, .
$$
The adjoint action induces an action of the abelian Lie algebra $\cG_{\bullet}
(G)_{\ab}= G_{\ab}\otimes \C$ on $\bb_{\bullet}(G)$, and this makes the latter a
(graded) module over the polynomial ring $\Sym_{\bullet} (G_{\ab}\otimes \C)$.
One reason for considering $\bb_\dot(G)$ is that, in general, it is easier to
compute than $\bb(G)$.

The invariant $\bb_\dot(G)$ is most useful when $G$ is a group whose Malcev Lie
algebra $\cG(G)$ is isomorphic to the degree completion $\widehat{\cG_\dot(G)}$
of its associated graded Lie algebra. Groups that satisfy this condition include
the Torelli group $T_g$ when $g\ge 3$, which is proved in \cite{H}, and 1-formal
groups\footnote{In the sense of Dennis Sullivan \cite{S}. Note that $T_g$ is
1-formal when $g\ge 6$, but is not when $g=3$.} (such as K\"ahler groups). An
isomorphism of $\cG(G)$ with $\widehat{\cG_\dot(G)}$ induces an isomorphism of
the infinitesimal Alexander invariant $\ccB(G)$ with the degree completion of
$\bb_\dot(G)$.

When $G$ is finitely generated, $N/G'$ is finite and $H_1(N,\C)$ is a finite dimensional nilpotent $\C
G_{\ab}$-module, it follows from Proposition \ref{prop:bkiso} that all maps in (\ref{eqn:comparisons}) are
isomorphisms.

\subsection{Main general result} \label{ss11}

To emphasize the key features, it is useful to abstract the situation. Define
the {\em Johnson kernel} $K_G$ of a group $G$ to be the kernel of the natural
projection, $G\surj G_{\abf}$, where $G_{\abf}$ denotes the maximal torsion-free
abelian quotient of $G$. Assume from now on that $G$ is finitely generated. For
example, when $g\ge 3$, the Torelli group $G=T_g$ is finitely generated,
$G_{\abf}=V_\Z$ and $K_G = K_g$.

Under additional assumptions, we want to relate the $\C G_{\ab}$-module
$H_1(K_G, \C)$ to the graded $\Sym_\dot(G_{\ab}\otimes \C)$-module
$\bb_\dot(G)$. The first issue is that the rings $\C G_{\ab}$ and
$\Sym_\dot(G_{\ab}\otimes\C)$ are different. 
This is not serious as it is well-known that they become isomorphic,
after completion. Specifically, denote the augmentation ideal of $\C G_{\ab}$ by
$I_{G_{\ab}}$ and the $I_{G_{\ab}}$-adic completion of $\C G_{\ab}$ by
$\widehat{\C G_{\ab}}$. The exponential mapping induces a filtered ring
isomorphism,
\begin{equation} \label{eq=expiso}
\widehat{\exp} \colon
\widehat{\C G_{\ab}} \stackrel{\simeq}{\longrightarrow} \widehat{\Sym_\dot
(G_{\ab}\otimes \C)}\, ,
\end{equation}
with the degree completion of $\Sym_\dot(G_{\ab}\otimes\C)$.

Recall that a $\C G_{\ab}$-module is {\em nilpotent} if it is annihilated by
$I^q_{G_{\ab}}$, for some $q$, and {\em trivial} if it is annihilated by
$I_{G_{\ab}}$. When $H_1(K_G, \C)$ is nilpotent, it has a {\em
natural} structure of  $\Sym_\dot(G_{\ab}\otimes \C)$-module. Indeed, $H_1(K_G,
\C)= \widehat{H_1(K_G, \C)}$ by nilpotence, so we may restrict via 
\eqref{eq=expiso} the canonical $\widehat{\Sym_\dot(G_{\ab}\otimes \C)}$-module
structure of $H_1(K_G, \C)$ to $\Sym_\dot(G_{\ab}\otimes \C)$. We may now state
our main general result.

\begin{thm} \label{thm:gral}
Suppose that $G$ is a finitely generated group whose Malcev Lie algebra $\cG(C)$
is isomorphic to the degree completion of its associated graded Lie algebra
$\cG_\dot(G)$. If $H_1(G, \C_{\rho})$ vanishes for every non-trivial character
$\rho : G \to \C^\ast$ that factors through $G_{\abf}$, then $H_1(K_G, \C)$ is a
finite-dimensional nilpotent $\C G_{\ab}$-module and there is a
$\Sym_\dot(G_{\ab}\otimes \C)$-module isomorphism
$
H_1(K_G, \C) \cong \bb_\dot(G).
$
Moreover, $I^q_{G_{\ab}}$ annihilates $H_1(K_G, \C)$ if and only if
$\bb_{q} (G)=0$. 
\end{thm}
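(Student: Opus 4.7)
The plan is two-stage: first establish that $H_1(K_G,\C)$ is a finite-dimensional nilpotent $\C G_{\ab}$-module, then identify it with the infinitesimal Alexander invariant $\bb_\dot(G)$ via the comparison diagram (\ref{eqn:comparisons}) and the exponential isomorphism \eqref{eq=expiso}.

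For finiteness and nilpotence, I would begin by observing that, because $G$ is finitely generated, a five-term or Lyndon--Hochschild--Serre argument applied to $1\to K_G\to G\to G_{\abf}\to 1$ shows that $H_1(K_G,\C)$ is a finitely generated module over the Noetherian ring $\C G_{\abf}$. The hypothesis that $H_1(G,\C_\rho)$ vanishes for every non-trivial character $\rho$ factoring through $G_{\abf}$ says that the first characteristic variety of $G$, intersected with the identity component $\Hom(G_{\abf},\C^\ast)$ of the character torus, reduces to the trivial character. A standard support argument (or Cartan--Leray-type computation) then forces the support of $H_1(K_G,\C)$ as a sheaf on $\Hom(G_{\abf},\C^\ast)$ to be the origin, whence finite dimensionality and annihilation by some power $I_{G_{\ab}}^N$ of the augmentation ideal.

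Once nilpotence is established, the subgroup $K_G/G'$ coincides with the torsion subgroup of $G_{\ab}$ and is therefore finite, so Proposition \ref{prop:bkiso} applies to (\ref{eqn:comparisons}) and yields a $\widehat{\C G_{\ab}}$-module isomorphism $H_1(K_G,\C) \cong \bb(G)$. The hypothesis $\cG(G)\cong \widehat{\cG_\dot(G)}$ transports naturally to an identification $\bb(G)\cong \widehat{\bb_\dot(G)}$, and finite-dimensionality of the left-hand side renders this completion trivial, giving $H_1(K_G,\C)\cong \bb_\dot(G)$. Transporting the action along $\widehat{\exp}$ in \eqref{eq=expiso} and restricting to $\Sym_\dot(G_{\ab}\otimes\C)$ produces the claimed isomorphism of graded modules.

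For the nilpotence criterion, the isomorphism above identifies $I_{G_{\ab}}^q\cdot H_1(K_G,\C)$ with $\bar I^q\cdot\bb_\dot(G)$, where $\bar I$ is the maximal graded ideal of $\Sym_\dot(G_{\ab}\otimes\C)$. Using the general fact (inherent in the construction of the infinitesimal Alexander invariant in \cite{PS0}) that $\bb_\dot(G)$ is generated in degree zero as a graded $\Sym_\dot(G_{\ab}\otimes\C)$-module, one has $\bb_q(G)=(G_{\ab}\otimes\C)\cdot\bb_{q-1}(G)$, so the vanishing of $\bb_q(G)$ propagates by induction to all higher degrees, and conversely annihilation of $\bb_\dot(G)$ by $\bar I^q$ forces $\bb_q(G)=0$. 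The main obstacle is the first stage: bootstrapping the coarse vanishing-of-twisted-cohomology hypothesis into the sharper statement that $H_1(K_G,\C)$ has trivial-character support and finite dimension; the remainder is a formal chase through (\ref{eqn:comparisons}), \eqref{eq=expiso}, and the graded combinatorics of $\bb_\dot(G)$.
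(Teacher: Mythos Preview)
Your proposal is correct and follows essentially the same strategy as the paper's proof. The only cosmetic difference is in the routing of the second stage: the paper passes directly from $H_1(K_G,\C)\cong\widehat{H_1(K_G,\C)}$ to $\widehat{B(G)}$ via Proposition~\ref{prop:bkiso}, and from there to $\widehat{\bb_\dot(G)}$ by invoking \cite[Proposition~5.4]{DPS} (which is where the Malcev hypothesis enters), whereas you route through the Malcev-side object $\bb(G)$ of diagram~(\ref{eqn:comparisons}); these two paths are equivalent, and the paper's nilpotence criterion argument is identical to yours.
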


The vanishing of $H_1(G,\C_\rho)$ above can be expressed geometrically in terms
of the {\em character group} $\T(G)=\Hom (G_{\ab}, \C^*)$ of $G$. Since $G$ is
finitely generated, this is an algebraic torus. Its identity component $\T^0(G)$
is the subtorus $\Hom (G_{\abf}, \C^*)$. The {\em restricted characteristic
variety} $\V (G)$ is the set of those $\rho \in \T^0 (G)$ for
which $H_1(G, \C_{\rho}) \ne 0$. It is known that $\V (G)$ is a Zariski
closed subset of $\T^0 (G)$. (This follows for instance from work by
E.~Hironaka in \cite{Hi}.) The vanishing hypothesis in Theorem \ref{thm:gral}
simply means that $\V (G)$ is trivial, i.e., $\V (G) \subseteq \{ 1\}$.

Work by Dwyer and Fried \cite{DF} (as refined in \cite{PS}) implies that $\V
(G)$ is finite precisely when $H_1(K_G, \C)$ is  finite-dimensional. This
approach led in \cite{DP} to the conclusion that  $\dim_\C H_1(K_g,\C)< \infty$,
for $g\ge 4$. Further analysis (carried out in Section \ref{sec:vtor}) reveals
that  $\V (G)$ is trivial if and only if $H_1(K_G,\C)$ is nilpotent over $\C
G_{\ab}$.

We show in Section \ref{sec:compl} that the $I_{G_{\ab}}$-adic completions of
$H_1(G', \C)$ and $H_1(K_G, \C)$ are isomorphic. The triviality of $\V(G)$
implies that the finite-dimensional vector space $H_1(K_G, \C)$ is isomorphic to
its completion. On the other hand, the first hypothesis of Theorem
\ref{thm:gral} implies, via a result from \cite{DPS}, that the degree completion
of the infinitesimal Alexander invariant $\bb_\dot(G)$ is isomorphic to the
$I_{G_{\ab}}$-adic completion of $H_1(G',\C)$. The details appear in Section
\ref{sec:infalex}.

\bigskip


To prove Theorem \ref{thm:main1} we need to check that $\V (T_g) \subseteq \{
1\}$. This is achieved in two steps. Firstly, we improve one of the main results
from \cite{DP}, by showing that  $\V (T_g)$ is not just finite, but consists
only of torsion characters. This is done in a broader context, in Theorem
\ref{thm:bref}. In this theorem, the symplectic symmetry plays a key role: the
$\Sp(H_\Z)$-module $(T_g)_{\ab}$ gives a canonical action of $\Sp(H_\Z)$ on the
algebraic group $\T^0(T_g)$. We know from \cite{DP} that this action leaves the
restricted characteristic variety $\V (T_g)$ invariant. The second step is to
infer that actually  $\V (T_g) = \{ 1\}$. We prove this by using  a key result
due to Putman, who showed in \cite{P} that all finite index subgroups of $T_g$
that contain $K_g$ have the same first Betti number when $g\ge 3$.

A basic result from \cite{H}, valid for $g\ge 3$, guarantees that $T_g$
satisfies the assumption on the Malcev Lie algebra in Theorem~\ref{thm:gral}.
Theorem \ref{thm:main1} follows. Again by \cite{H}, the group  $T_g$ is {\em
$1$-formal}, when $g\ge 6$; equivalently, the graded Lie algebra $\cG_{\bullet}
(T_g)$ has a quadratic presentation.  Theorem \ref{thm:main2} follows from a
general result in \cite{PS0}, that associates to a quadratic presentation of the
Lie algebra $\cG_{\bullet}(G)$ a finite $\Sym_{\bullet} (G_{\ab}\otimes
\C)$-presentation for the infinitesimal Alexander invariant $\bb_{\bullet} (G)$.
When $g\ge 6$, we use the quadratic presentation of $\cG_{\bullet} (T_g)$
obtained in \cite{H}.

\section{Completion} \label{sec:compl}

We start in this section by establishing several general results, related to
$I$-adic completions of Alexander-type  invariants. We refer the reader to the
books by Eisenbud \cite[Chapter 7]{E} and Matsumura \cite[Chapter 9]{M}, for
background on completion techniques in commutative algebra. Throughout the
paper, we work with $\C$-coefficients, unless otherwise specified.  The {\em
augmentation ideal} of a group $G$, $I_G$, is the kernel of the $\C$-algebra
homomorphism, $\C G \to \C$, that sends each group element to $1$.

Let $N$ be a normal subgroup of $G$. Note that $G$-conjugation endows
$H_{\bullet}N$ with a natural structure of (left) module over the group algebra
$\C (G/N)$, and similarly for cohomology. If $N$ contains the derived subgroup
$G'$,  both $H_{\bullet} N$ and $H^{\bullet} N$ may be viewed as $\C
(G/G')$-modules, by restricting the scalars via the ring epimorphism $\C (G/G')
\surj \C (G/N)$. When $G$ is finitely generated, $H_1N$ is a finitely generated
module over the commutative Noetherian ring  $\C (G/N)$.

An important particular case arises when $N=G'$. Denoting abelianization by 
$G_{\ab}:=G/G'$, set $B(G):= H_1G'=G'_{\ab}\otimes \C = (G'/G'')\otimes \C$,
and call $B(G)$  the {\em Alexander invariant} of $G$.  These constructions are
functorial, in the following sense. Given a group homomorphism, $\varphi: \oG
\to G$,  it induces a $\C$-linear map, $B(\varphi): B(\oG)\to B(G)$, and a ring
homomorphism $\C \varphi: \C \oG_{\ab} \to \C G_{\ab}$. Moreover, $B(\varphi)$
is $\C \varphi$-{\em equivariant}, i.e.,  $B(\varphi) (\bar{a}\cdot \bar{x})= \C
\varphi (\bar{a})\cdot B(\varphi)(\bar{x})$, for $\bar{a}\in \C \oG_{\ab}$ and
$\bar{x}\in B(\oG)$.

The {\em $I$-adic filtration} of the $\C G_{\ab}$-module $B(G)$, $\{
I^q_{G_{\ab}} \cdot  B(G) \}_{q\ge 0}$, gives rise to the {\em completion} map
$B(G) \to \widehat{B(G)}$, and to the $I$-adic associated graded, $\gr_{\bullet}
B(G)$. By $\C \varphi$-equivariance, $B(\varphi)$ respects the $I$-adic
filtrations. Consequently, there is an induced filtered map, 
$\widehat{B(\varphi)}:  \widehat{B(\oG)} \to \widehat{B(G)}$, compatible with
the completion maps. One knows that  $\widehat{B(\varphi)}$ is a filtered
isomorphism if and only if $\gr_{\bullet}(B\varphi): \gr_{\bullet} B(\oG) \to
\gr_{\bullet} B(G)$ is an isomorphism.

A useful related construction (see \cite{Se}) involves the lower central series
of a group $G$. The (complex) {\em associated graded Lie algebra}
$$
\cG_\dot (G) := \bigoplus_{q\ge 1} (G^q /G^{q+1})\otimes \C
$$
is generated as a Lie  algebra by $\cG_1(G)= H_1 G$. Each group homomorphism
$\varphi: \oG \to G$ gives rise to a graded Lie algebra homomorphism,
$\gr_{\bullet}(\varphi): \cG_{\bullet}(\oG) \to \cG_{\bullet}(G)$.

Malcev completion (over $\C$), as defined by Quillen \cite[Appendix A]{Q}, is a
useful tool. It associates to a group $G$ a complex prounipotent group
$\ccG(G)$, and a homomorphism $G \to \ccG(G)$. The {\em Malcev Lie algebra} of
$G$ is the Lie algebra $\cG(G)$ of $\ccG(G)$. It is pronilpotent. The
exponential mapping $\exp : \cG(G) \to \ccG(G)$ is a bijection.

The lower central series filtrations
\begin{align*}
G &= G^1 \supseteq G^2 \supseteq G^3 \supseteq \cdots
\cr
\ccG(G) &= \ccG(G)^1 \supseteq \ccG(G)^2 \supseteq \ccG(G)^3 \supseteq \cdots
\cr
\cG(G) &= \cG(G)^1 \supseteq \cG(G)^2 \supseteq \cG(G)^3 \supseteq \cdots
\end{align*}
of $G$, $\ccG(G)$ and $\cG(G)$ are preserved by the canonical homomorphism $G
\to \ccG(G)$ and the exponential mapping $\exp : \cG(G) \to \ccG(G)$. They
induce Lie algebra isomorphisms of the associated graded objects:
$$
\xymatrix{
\gr_\dot (G)\otimes \C \ar[r]^\simeq & \gr_\dot \ccG(G) &
\gr_\dot \cG(G) \ar[l]_\simeq
}
$$
(cf. \cite[Appendix A]{Q}).

We will need the following basic fact, which is a straightforward generalization
of a result \cite{stallings} of Stallings: if a group homomorphism  $\varphi:
\oG \to G$ induces an isomorphism $\varphi^1: H^1G \isom H^1 \oG$ and a
monomorphism  $\varphi^2: H^2G \inj H^2 \oG$, then 
\begin{equation}
\label{eq:maliso}
\cG (\varphi): \cG (\oG) \isom \cG (G)
\end{equation}
is a filtered Lie isomorphism. A proof can be found in \cite[Corollary 3.2]{H1}.

With these preliminaries, we may now state and prove our first result.

\begin{prop}
\label{prop:bhatiso}
Suppose that $\oG$ is a finite index subgroup of a finitely generated group $G$.
If $\varphi_1: H_1 \oG \to H_1 G$ is a an isomorphism, then
$\widehat{B(\varphi)}:  \widehat{B(\oG)} \to \widehat{B(G)}$ is a filtered
isomorphism, where $\varphi: \oG \inj G$ is  the inclusion map.
\end{prop}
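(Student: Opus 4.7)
The plan is to reduce everything to the Stallings-type Malcev isomorphism \eqref{eq:maliso}. As recalled just before the statement of the proposition, $\widehat{B(\varphi)}$ is a filtered isomorphism if and only if the induced map $\gr_{\bullet}(B\varphi) : \gr_{\bullet} B(\oG) \to \gr_{\bullet} B(G)$ on $I$-adic associated gradeds is an isomorphism. Thus it suffices to establish this after passing to associated gradeds, and my strategy is to show that $\gr_\dot B(G)$, together with its $\Sym_\dot(G_{\ab}\otimes\C)$-module structure, depends functorially on the Malcev Lie algebra $\cG(G)$, and then to verify that $\cG(\varphi)$ is itself an isomorphism.

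First I would check the hypotheses of the Stallings-type criterion \eqref{eq:maliso} for the inclusion $\varphi : \oG \hookrightarrow G$. Since $\C$ is a field, the universal coefficient theorem identifies $H^1(-,\C)$ with the $\C$-dual of $H_1(-,\C)$, so the hypothesis that $\varphi_1$ is an isomorphism immediately gives that $\varphi^1 : H^1 G \isom H^1 \oG$ is an isomorphism. For the injectivity of $\varphi^2 : H^2 G \to H^2 \oG$ I would invoke the finite-index transfer: since $[G : \oG]$ is finite, the composition
$$
H^\dot(G,\C) \xrightarrow{\;\varphi^\dot\;} H^\dot(\oG,\C) \xrightarrow{\;\mathrm{cor}\;} H^\dot(G,\C)
$$
is multiplication by $[G:\oG]$, hence is invertible over $\C$. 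In particular $\varphi^\dot$ is injective in every degree, so $\varphi^2$ is a monomorphism. Applying \eqref{eq:maliso} yields a filtered isomorphism $\cG(\varphi) : \cG(\oG) \isom \cG(G)$, and hence an isomorphism on the associated graded Lie algebras $\cG_\dot(\oG) \isom \cG_\dot(G)$.

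The remaining step is to deduce from the Malcev isomorphism that $\gr_\dot B(\varphi)$ is itself an isomorphism of $\Sym_\dot(G_{\ab}\otimes\C)$-modules. The point is that the $I$-adic filtration on $B(G) = (G'/G'')\otimes\C$ is induced by conjugation with the lower central series: the subspace $I_{G_{\ab}}^q \cdot B(G)$ is the image of $[G^q, G']\otimes \C$ in $B(G)$. Consequently, $\gr_\dot B(G)$ is built functorially out of the graded Lie algebra $\cG_\dot(G)$ (it is a subquotient of the Alexander invariant of $\cG_\dot(G)$ of the kind used to define $\bb_\dot(G)$), and the map induced by $\varphi$ between these subquotients is the one coming from $\gr_\dot \cG(\varphi)$. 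Since $\cG_\dot(\varphi)$ is an isomorphism, so is $\gr_\dot B(\varphi)$, and therefore $\widehat{B(\varphi)}$ is a filtered isomorphism, as required.

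The part that I expect to carry the real technical weight is the last paragraph: making precise the identification of the $I$-adic associated graded of $B(G)$ in terms of the associated graded Lie algebra $\cG_\dot(G)$, and verifying its naturality with respect to group homomorphisms. Everything else reduces either to the transfer formula for finite-index subgroups over $\C$ or to a direct application of a theorem already recalled in the paper.
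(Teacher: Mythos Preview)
Your plan is essentially the same as the paper's: verify the Stallings hypotheses via the finite-index transfer, deduce the Malcev isomorphism \eqref{eq:maliso}, and then use that $\widehat{B(G)}$ depends functorially on $\cG(G)$. The only difference is that where you pass to associated gradeds and sketch an identification of $\gr_\dot B(G)$ in terms of $\cG_\dot(G)$, the paper simply invokes \cite[Proposition~5.4]{DPS}, which states directly that the filtered vector space $\widehat{B(G)}$ is functorially determined by the filtered Lie algebra $\cG(G)$; your ``technical crux'' is precisely the content of that cited result, so you have correctly located where the work lies.
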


\begin{proof}
Since $[G:\oG]$ is finite, $\varphi^{\bullet}: H^{\bullet}G \to H^{\bullet}\oG$
is a monomorphism. So,  $\varphi^1$ is an isomorphism and $\varphi^2$
is injective. Hence, the filtered Lie isomorphism \eqref{eq:maliso} holds. 

Proposition 5.4 from \cite{DPS} guarantees that the filtered vector space
$\widehat{B(G)}$ is functorially determined by the filtered Lie algebra $\M
(G)$. This completes the proof.
\end{proof}

Consider now a group extension
\begin{equation}
\label{eq:ext}
1\to N \stackrel{\psi}{\longrightarrow}  \pi \to Q\to 1\, .
\end{equation}
Denote by $p_{\bullet}: H_{\bullet}N \surj (H_{\bullet}N)_Q$ the canonical
projection onto the co-invariants.  Clearly, $\psi_{\bullet}: H_{\bullet}N \to
H_{\bullet} \pi$ factors through $p_{\bullet}$, giving rise to a map
\begin{equation}
\label{eq:defq}
q_{\bullet}: (H_{\bullet}N)_Q \to H_{\bullet} \pi \, .
\end{equation}
When $Q$ is finite, $q_{\bullet}$ is an isomorphism; see Brown's book
\cite[Chapter III.10]{B}. 

Given a $\C \pi$-module $M$, note that $I_N \cdot M$ is a $\C \pi$-submodule of
$M$;  see \cite[Chapter II.2]{B}. Consequently, the natural projection onto the
$N$-co-invariants, $p: M\surj M_N$, is $\C \pi$-linear and induces a filtered
map,  $\hat{p}: \widehat{M} \to \widehat{M_N}$, between $I_{\pi}$-adic
completions. 

We will need the following probably known result. For  the reader's convenience,
we sketch a proof.

\begin{lemma}
\label{lem:phatiso}
Suppose that $N$ is a finite subgroup of a finitely generated abelian group
$\pi$. If $M$ is a finitely generated $\C \pi$-module, then $\hat{p}:
\widehat{M} \to \widehat{M_N}$ is a filtered isomorphism.
\end{lemma}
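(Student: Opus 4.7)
The plan is to exploit the fact that since we work over $\C$ and $N$ is finite, we have the averaging idempotent $e_N := \frac{1}{|N|}\sum_{n\in N} n \in \C\pi$, which splits everything very cleanly. The first step is to observe that $e_N(n-1)=0$ for every $n \in N$, while $1-e_N$ itself lies in $I_N$; combining these gives the identification $I_N \cdot \C\pi = (1-e_N)\C\pi$, and hence $I_N \cdot M = (1-e_N) M$. Since $e_N$ is a central idempotent in $\C\pi$, we get a $\C\pi$-module decomposition $M = e_N M \oplus (1-e_N) M$, under which $p \colon M \surj M_N$ is just projection onto the first summand. Consequently $M_N \cong e_N M$ and $\ker p = (1-e_N) M$.

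Next I would reduce the statement to proving that the $I_\pi$-adic completion of the summand $(1-e_N) M$ is zero. Since $\pi$ is finitely generated abelian, $\C\pi$ is Noetherian and $I_\pi$-adic completion is exact on finitely generated modules, so the short exact sequence
\[
0 \to (1-e_N) M \to M \xrightarrow{p} M_N \to 0
\]
becomes $0 \to \widehat{(1-e_N)M} \to \widehat{M} \xrightarrow{\hat p} \widehat{M_N} \to 0$. Thus $\hat p$ is an isomorphism precisely when $\widehat{(1-e_N) M}=0$; filteredness is automatic, since the filtration on $\widehat{M_N}$ is the image filtration from $\widehat{M}$.

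The key point, and in my view the main obstacle, is to show that the idempotent $1-e_N$ dies in the completion $\widehat{\C\pi}$. I would argue this element by element: for any $n \in N$ of (finite) order $k$ one has $(n-1)(1+n+\cdots+n^{k-1}) = n^k - 1 = 0$ in $\C\pi$, and the second factor reduces mod $I_\pi$ to the nonzero scalar $k$. Since $\widehat{\C\pi}$ is the local ring obtained from $\C\pi$ by $I_\pi$-adic completion, the second factor becomes a unit there, forcing $n-1=0$ in $\widehat{\C\pi}$. Consequently every $n \in N$ maps to $1$ in $\widehat{\C\pi}$, so $e_N \mapsto 1$ and $1-e_N \mapsto 0$.

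Finally, I would combine these ingredients. Because $(1-e_N)M$ is finitely generated over $\C\pi$ and $\C\pi$ is Noetherian, we have $\widehat{(1-e_N)M} \cong (1-e_N)M \otimes_{\C\pi} \widehat{\C\pi}$. For any $(1-e_N)m \in (1-e_N)M$ and $\hat a \in \widehat{\C\pi}$,
\[
(1-e_N)m \otimes \hat a \;=\; m \otimes (1-e_N)\hat a \;=\; m \otimes 0 \;=\; 0,
\]
so the completion vanishes. Together with the reduction above, this gives that $\hat p$ is a filtered isomorphism, completing the proof.
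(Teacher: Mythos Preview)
Your argument is correct, with one small notational slip: in the final display, $m$ need not lie in $(1-e_N)M$, so $m \otimes (1-e_N)\hat a$ is not literally an element of $(1-e_N)M \otimes_{\C\pi}\widehat{\C\pi}$. The fix is immediate: write $x=(1-e_N)m$, use idempotence to get $x=(1-e_N)x$, and then move $(1-e_N)$ across the tensor, obtaining $x\otimes\hat a = x\otimes(1-e_N)\hat a = 0$.

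Your route differs from the paper's. Both proofs reduce to showing that each $n\in N$ becomes trivial after $I_\pi$-adic completion, i.e.\ that $I_N\cdot M\subseteq\bigcap_q I_\pi^q M$. The paper argues this on each finite quotient $M/I_\pi^q M$: the action of $n$ is unipotent there (because $n-1\in I_\pi$) and semisimple (because $n$ has finite order), hence trivial. You instead use the factorization $(n-1)(1+n+\cdots+n^{k-1})=0$ together with the observation that the second factor, having augmentation $k\ne 0$, is a unit in the local ring $\widehat{\C\pi}$; this forces $n-1=0$ there. Your additional use of the averaging idempotent $e_N$ gives a clean $\C\pi$-splitting $M=e_NM\oplus(1-e_N)M$ up front, which makes the filtered-isomorphism claim transparent, while the paper handles the filtration by tensoring the short exact sequence with $\C\pi/I_\pi^q$. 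The two approaches are of comparable length; the paper's ``unipotent $+$ semisimple $\Rightarrow$ trivial'' trick is arguably more conceptual, whereas your argument is more explicitly ring-theoretic and makes the role of characteristic zero (the invertibility of $|N|$ and of $k$) completely visible.
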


\begin{proof}
We start with a simple remark: if $R$ is a finitely generated commutative
$\C$-algebra and $I\subseteq R$ is a maximal ideal, then the roots of unity $u$
from $1+I$ act as the identity on $M/I^q \cdot M$, for all $q$, when $M$ is a
finitely generated $R$-module. Indeed, $u-1$ annihilates $I^s \cdot M/ I^{s+1}
\cdot M$ for all $s$, so the $u$-action on the finite-dimensional $\C$-vector
space $M/I^q \cdot M$ is both unipotent and semisimple, hence trivial.

Now, consider the exact sequence of finitely generated $R$-modules,
\[
0\to I_N \cdot M \rightarrow M \rightarrow M_N \to 0\, ,
\]
where $R=\C \pi$. Tensoring it with $R/I^q_{\pi}$, we infer that our claim is
equivalent to $I_N \cdot M \subseteq \cap_{q} I^q_{\pi} \cdot M$. This in turn
follows from the remark.
\end{proof}

\begin{remark}
\label{rem=restr}
Let $M$ be a module over a group ring $\C \pi$, and $\overline{\pi} \surj \pi$ a
group epimorphism, giving $M$ a structure of $\C \overline{\pi}$-module, by
restriction via $\C \overline{\pi} \surj \C \pi$.  Plainly,
$I^q_{\overline{\pi}} \cdot M =I^q_{\pi} \cdot M$, for all $q$. In particular,
the $I_{\overline{\pi}}$-adic and $I_{\pi} $-adic completions of $M$ are
filtered isomorphic, and $M$ is nilpotent (or trivial) over  $\C \overline{\pi}$
if and only if this happens over $\C \pi$.  
\end{remark}

Given a group $G$, set $G_{\abf}:=G_{\ab}/(\text{torsion})$. The {\em Johnson kernel}, $K_G$, is the kernel of the
canonical projection $G \surj G_{\abf}$. When $G=T_g$ and $g\ge 3$, Johnson's fundamental results from
\cite{J0, J3} show that $K_G=K_g$,  whence our terminology.

More generally, consider an extension
\begin{equation}
\label{eq:kext}
1\to G' \stackrel{\psi}{\longrightarrow} K \to F\to 1\, ,
\end{equation}
with $F$ finite. 
Plainly, $\psi_{\bullet}: H_{\bullet}G' \to H_{\bullet}K$ is $\C
G_{\ab}$-linear. Let $\widehat{\psi_{\bullet}}$ be the induced map on
$I_{G_{\ab}}$-adic completions. (When $K=K_G$, note that $ H_{\bullet}K_G$ is actually a $\C
G_{\abf}$-module, with $\C G_{\ab}$-module structure induced by restriction, via
$\C G_{\ab} \surj \C G_{\abf}$. By Remark \ref{rem=restr}, its
$I_{G_{\ab}}$-adic and $I_{G_{\abf}}$-adic completions coincide.) Here is our
second main result in this section. 

\begin{prop}
\label{prop:bkiso}
If $G$ is a finitely generated group and $K$ is a subgroup like in \eqref{eq:kext}, then 
$\widehat{\psi_{1}}: \widehat{H_1G'} \to \widehat{H_1 K}$ is a filtered isomorphism.
\end{prop}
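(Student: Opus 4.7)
The proof plan is a direct reduction: factor $\psi_1$ through the $F$-coinvariants of $H_1 G'$ and invoke Lemma~\ref{lem:phatiso}. Since $G' \subseteq K$ with finite quotient $F$, the map $\psi_1$ factors as
\[
H_1 G' \xrightarrow{p_1} (H_1 G')_F \xrightarrow{q_1} H_1 K,
\]
where $p_1$ is the canonical projection onto $F$-coinvariants and $q_1$ is the specialization of \eqref{eq:defq} to the extension \eqref{eq:kext}. Because $F$ is finite, Brown \cite[Chapter~III.10]{B} guarantees that $q_1$ is an isomorphism, hence induces an isomorphism on $I_{G_{\ab}}$-adic completions.

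Next, I would identify $F = K/G'$ with its (automatic) embedding as a finite subgroup of the finitely generated abelian group $\pi := G_{\ab}$. The Alexander invariant $B(G) = H_1 G'$ is a finitely generated $\C\pi$-module since $G$ is finitely generated, and both $p_1$ and $q_1$ are $\C\pi$-linear — the $\pi$-action on $H_1 K$ is the one induced from $\pi/F = G/K$, pulled back to $\pi$ via the quotient $\pi \surj \pi/F$. Applying Lemma~\ref{lem:phatiso} with this choice of $\pi$, $N = F$, and $M = B(G)$ shows that $\hat{p}_1 : \widehat{H_1 G'} \to \widehat{(H_1 G')_F}$ is a filtered isomorphism. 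Composing with the filtered isomorphism induced by $q_1$ yields the desired filtered isomorphism $\widehat{\psi_1} = \hat{q}_1 \circ \hat{p}_1$.

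I do not foresee any real obstacle here: Lemma~\ref{lem:phatiso} has been set up precisely to handle the passage from $H_1 G'$ to its $F$-coinvariants, and Brown's theorem handles the other half. The only items to verify are bookkeeping — that $H_1 G'$ is finitely generated over $\C G_{\ab}$ (standard when $G$ is finitely generated), that the three $\C\pi$-module structures are compatible, and that all maps respect the $I_{G_{\ab}}$-adic filtrations (automatic from $\C\pi$-linearity).
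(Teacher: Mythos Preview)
Your proof is correct and follows essentially the same route as the paper: factor $\psi_1$ as $q_1 \circ p_1$, use Lemma~\ref{lem:phatiso} (with $\pi = G_{\ab}$, $N = F$, $M = H_1 G'$) for the first factor, and the Brown coinvariants isomorphism for the second, after checking $\C G_{\ab}$-linearity. The paper's own proof is the same two-step argument, stated slightly more tersely.
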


\begin{proof}
We apply Lemma \ref{lem:phatiso} to $F\subseteq G_{\ab}$ and $M=H_1 G'$, to
obtain a filtered isomorphism  $\hat{p}: \widehat{H_1 G'}\isom \widehat{(H_1
G')_F}$ between $I_{G_{\ab}}$-adic completions. We conclude by noting that the
isomorphism \eqref{eq:defq} coming from \eqref{eq:kext}, $q: (H_1 G')_F \isom
H_1 K$, is $\C G_{\ab}$-linear. The last claim is easy to check: plainly, the
$\C F$-module structure on $H_1 G'$  coming from \eqref{eq:kext} is the
restriction to $\C F$ of the canonical $\C G_{\ab}$-structure.
\end{proof}

\section{Characteristic varieties} \label{sec:vtor}

We show that the (restricted) characteristic variety of $T_g$ is trivial for all
$g\ge 4$, as stated in Theorem \ref{thm:main1}, thus improving one of the main
results in \cite{DP}. Fix a symplectic basis of the first homology $H_\Z$ of
the reference surface $\Sigma$. This gives an identification of $\Sp(H_R)$
with $\Sp_g(R)$ for all rings $R$.

We start by reviewing a couple of definitions and relevant facts. Let $G$ be a
finitely generated group. The {\em character torus} $\T (G)= \Hom (G_{\ab},
\C^*)$ is a linear algebraic group with coordinate ring $\C G_{\ab}$. The
connected component of $1\in \T (G)$ is denoted $\T^0(G)= \Hom (G_{\abf}, \C^*)$
and has coordinate ring $\C G_{\abf}$.

The {\em characteristic varieties} of $G$ are defined for (degree) $i\ge 0$,
(depth) $k\ge 1$ by 
\begin{equation}
\label{eq:defv}
\V^i_k (G)= \{ \rho\in \T (G) \mid \dim_{\C} H_i(G, \C_{\rho})\ge k \}\, .
\end{equation}
Here $\C_{\rho}$ denotes the $\C G$-module $\C$  given by the change of rings
$\C G\to \C$ corresponding to $\rho$. Their {\em restricted} versions are the
intersections  $\V^i_k (G) \cap \T^0 (G)$. The restricted characteristic
variety  $\V^1_1 (G) \cap \T^0 (G)$ is denoted $\V (G)$. As explained in
\cite[Section 6]{DP}, it follows from results in \cite{Hi} about finitely
presented groups that both $\V^1_k (G)$ and $\V^1_k (G) \cap \T^0 (G)$ are
Zariski closed subsets, for all $k$.

When $G=T_g$ and $g\ge 3$, these constructions acquire an important symplectic
symmetry; see \cite{DP}. We recall that the linear algebraic group $\Sp_g (\C)$
is defined over $\Q$, simple, with positive $\Q$-rank, and contains $\Sp_g (\Z)$
as an arithmetic subgroup. 

The $\G_g$-conjugation in the defining extension \eqref{eq:deft} for $T_g$
induces representations of $\Sp_g (\Z)$ in the finitely generated abelian groups
$(T_g)_{\ab}$ and $(T_g)_{\abf}$. They give rise to natural $\Sp_g
(\Z)$-representations in the algebraic groups $\T (T_g)$ and  $\T^0 (T_g)$, for
which the inclusion $\T^0 (T_g) \subseteq \T (T_g)$ becomes $\Sp_g
(\Z)$-equivariant. Furthermore, $\V (T_g)\subseteq \T^0 (T_g)$ is $\Sp_g
(\Z)$-invariant. 

By Johnson's work \cite{J0, J3}, we also know that the $\Sp_g (\Z)$-action on
$(T_g)_{\abf}$  extends to a rational, irreducible and non-trivial $\Sp_g
(\C)$-representation in $(T_g)_{\abf} \otimes \C$. 

We will need the following refinement of a basic result on propagation of
irreducibility, proved by Dimca and Papadima in \cite{DP}. This refinement is 
closely related to an open question formulated in \cite[Section 10]{PS1}, on
outer automorphism groups of free groups.

\begin{theorem}
\label{thm:bref}
Let $L$ be a $D$-module which is finitely generated and free as an abelian
group. Assume that $D$ is an arithmetic subgroup of a simple $\C$-linear
algebraic group $S$ defined over $\Q$, with $\rank_\Q(S)\ge 1$. Suppose also
that the $D$-action on $L$ extends to an irreducible, non-trivial, rational
$S$-representation in $L\otimes \C$. Let $W\subset \T (L)$ be a $D$-invariant,
Zariski closed, proper subset of $\T (L)$. Then $W$ is a finite set of torsion
elements in $\T (L)$. \end{theorem}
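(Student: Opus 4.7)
The plan is to split the proof into two parts — finiteness of $W$ and torsion-ness of its elements — both grounded in Borel's density theorem, which applies since $S$ is simple with $\rank_\Q(S)\ge 1$ and $D$ is arithmetic; consequently $D$ and every finite-index subgroup of it are Zariski dense in $S$. As a preliminary reduction, I would replace $D$ by a finite-index subgroup $D_1\le D$ stabilising each of the finitely many irreducible components of $W$, reducing to the case where $W$ is $D_1$-invariant irreducible.

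For finiteness, I would first analyse the translation stabiliser $\mathrm{Stab}_{\T(L)}(W)\subseteq\T(L)$. Its identity component $H$ is a subtorus that is $D_1$-invariant, since the algebraic automorphism action of $D_1$ on $\T(L)$ preserves stabilisers of invariant sets. Under the standard correspondence between subtori of $\T(L)$ and saturated sublattices of $L$, $H$ corresponds to a $D_1$-invariant saturated sublattice of $L$. Zariski density of $D_1$ in $S$, combined with irreducibility of $L\otimes\C$ as an $S$-representation, forces the only $D_1$-invariant subspaces of $L\otimes\C$ to be $0$ and the whole space; so $H$ is $\{1\}$ or $\T(L)$. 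The latter gives $W=\T(L)$, contradicting properness, so $\mathrm{Stab}(W)$ is finite.

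The core obstacle is upgrading finiteness of $\mathrm{Stab}(W)$ to finiteness of $W$ itself. I would handle the hypersurface case first: if $W=V(f)$ for an irreducible Laurent polynomial $f\in\C[L]$, then $D_1$-invariance of $(f)$ forces $d\cdot f=u_d f$ with $u_d$ a unit, so the Newton polytope of $f$ moves under $D_1$ only by translation. Centring the polytope, its vertices form a $D_1$-invariant finite set in $L\otimes\RR$; a finite-index subgroup $D_1'\le D_1$ fixes each vertex, but $(L\otimes\RR)^{D_1'}=0$ by Zariski density plus irreducibility, so every vertex is $0$ and $f$ is a monomial, contradicting $V(f)$ being a proper hypersurface. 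For higher codimension I would induct: for $W$ of codimension $\ge 2$, pick irreducible $f\in I(W)$ and form the $D_1$-invariant variety $Z=\bigcap_{d\in D_1}V(d\cdot f)\supseteq W$, which lies in the hypersurface $V(f)$. After passing to a further finite-index subgroup, an irreducible component of $Z$ containing $W$ is $D_1$-invariant of strictly smaller codimension than $W$, so by inductive hypothesis it equals $\T(L)$, contradicting its containment in $V(f)$. Making this induction watertight (in particular handling the case where that component coincides with $W$) is the main technical difficulty, and is where the refinement of Dimca–Papadima's propagation-of-irreducibility argument from \cite{DP} has to be pushed through.

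Given $W$ finite, $D$ permutes $W$ through a finite quotient, so a further finite-index subgroup $D_2\le D_1$ fixes $W$ pointwise. Each $w\in W$ is then a character $w\colon L\to\C^*$ vanishing on $N:=\langle d\ell-\ell:d\in D_2,\ \ell\in L\rangle\subseteq L$, and $w$ is torsion if and only if $[L:N]<\infty$, equivalently $(L\otimes\C)_{D_2}=0$, equivalently (by duality) $(L^*\otimes\C)^{D_2}=0$. Since $L^*\otimes\C$ is a rational, irreducible and non-trivial $S$-representation (the contragredient of an irreducible non-trivial representation of the semisimple group $S$), one has $(L^*\otimes\C)^S=0$; Zariski density of $D_2$ in $S$ then upgrades this to $(L^*\otimes\C)^{D_2}=(L^*\otimes\C)^S=0$. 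Hence every $w\in W$ is torsion. The finiteness step is the main obstacle; the torsion refinement follows cleanly from Borel density and the semisimple representation theory of $S$.
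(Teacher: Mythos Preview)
Your torsion argument is correct and is essentially the paper's argument in different packaging. Both reduce to the single fact that $(L^\ast\otimes\C)^{D'}=0$ for any finite-index subgroup $D'\le D$, obtained from Borel density plus irreducibility and non-triviality of $L\otimes\C$. The paper argues by contradiction: for a hypothetical infinite-order $t\in W$ with stabiliser $D_t$, the Zariski closure $\T_t=\overline{\langle t\rangle}$ is positive-dimensional and pointwise $D_t$-fixed, so its tangent space $T_1\T_t$ sits inside $(T_1\T)^{D_t}=(L^\ast\otimes\C)^{D_t}=0$, a contradiction. You instead observe directly that every $D_2$-fixed character factors through $L/N$, and $[L:N]<\infty$ because $(L\otimes\C)_{D_2}\cong\big((L^\ast\otimes\C)^{D_2}\big)^\ast=0$. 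These are the same idea; your formulation is arguably cleaner since it avoids the auxiliary subtorus $\T_t$. (One small imprecision: ``$w$ is torsion if and only if $[L:N]<\infty$'' should read ``if''---a particular $w$ could be torsion even when $L/N$ is infinite---but only the ``if'' direction is needed.)

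For finiteness, note that the paper does \emph{not} reprove it: it simply quotes the main result of \cite{DP} as a black box (and remarks that this part needs only irreducibility, not non-triviality). Your Newton-polytope argument dispatches the hypersurface case cleanly, but the inductive step has exactly the gap you flag: the component of $Z=\bigcap_{d\in D_1}V(d\cdot f)$ containing $W$ may well equal $W$ itself, and then the induction does not advance. There is no obvious way to rule this out without importing the machinery from \cite{DP}. Since the theorem is stated as a \emph{refinement} of the \cite{DP} finiteness result, you should simply cite \cite{DP} for finiteness and present only the torsion step---which you already have.
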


\begin{proof}
According to one of the main results from \cite{DP} (which needs no
non-triviality assumption on the $S$-representation $L\otimes \C$), $W$ must be
finite. We have to show that any $t\in W$ is a torsion point  of $\T =\T (L)$.
We know that the stabilizer of $t$, $D_t$, has finite index in $D$. By Borel's
density theorem, $D_t$ is Zariski dense in $S$. 

Suppose that $t\in W$ has infinite order, and let $\T_t \subseteq \T$ be the
Zariski closure of the subgroup generated by $t$. By our assumption, the closed
subgroup $\T_t$ is positive-dimensional. Since $\T_t$ is fixed by $D_t$, the Lie
algebra $T_1 \T_t \subseteq T_1 \T= \Hom (L\otimes \C, \C)$ is $D_t$-fixed as
well. By Zariski density, $T_1 \T_t$ is then a non-zero, $S$-fixed subspace of
$T_1 \T$, contradicting the non-triviality hypothesis on $L\otimes \C$.
\end{proof}

We know from \cite{DP} that $\V (T_g)$ is finite, for $g\ge 4$. We may apply
Theorem \ref{thm:bref}  to $D= \Sp_g (\Z)$ acting on $L= (T_g)_{\abf}$, $S=
\Sp_g (\C)$ and $W= \V (T_g)$. We infer that $\V (T_g)$ consists of $m$-torsion
elements in $\T^0 (T_g)$, for some $m\ge 1$. 

To derive the triviality of $\V(T_g)$ from this fact, we will use another
standard tool from commutative algebra. For an affine $\C$-algebra $A$, let
$\Spec (A)$ be its maximal spectrum. For a $\C$-algebra map between affine
algebras, $f:A\to B$, $f^*: \spec (B)\to \spec (A)$  stands for the induced map,
that sends $\Spec (B)$ into $\Spec (A)$. For a finitely generated $A$-module
$M$, the {\em support} $\supp_A (M)$ is  the Zariski closed subset of $\spec
(A)$ $V(\ann (M))= V(E_0(M))$, where $E_0(M)$ is the ideal generated by the
codimension zero minors of a finite $A$-presentation for $M$; see \cite[Chapter
20]{E}. 

There is a close relationship between characteristic varieties and supports of
Alexander-type invariants. Let $N$ be a normal subgroup of a finitely generated
group $G$, with abelian quotient. Denote by $\nu: G\surj G/N$ the canonical
projection, and let $\nu^*: \T (G/N) \inj \T (G)$ be the induced map on maximal
spectra of corresponding abelian group algebras. It follows for instance from
Theorem 3.6 in \cite{PS} that $\nu^*$ restricts to an identification (away from
$1$)
\begin{equation}
\label{eq:supv}
\Spec (\C (G/N)) \cap \supp_{\C (G/N)} (H_1 N) \equiv \im (\nu^*)
\cap \V^1_1 (G)\, .
\end{equation}

We will need the following (presumably well-known) result on supports. For the
sake of completeness, we include a proof.

\begin{lemma}
\label{lem:supp}
If $f: A\inj B$ is an integral extension of affine $\C$-algebras and  $M$ is a
finitely generated $B$-module, then $M$ is finitely generated over $A$ and
$\supp_A (M)= f^*(\supp_B (M))$,
$\Spec (A) \cap \supp_A (M)= f^*( \Spec (B) \cap \supp_B (M))$. 
\end{lemma}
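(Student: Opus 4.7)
The plan is to reduce everything to two standard facts about integral extensions: first, that $B$ is finite as an $A$-module (and hence so is $M$), and second, the lying-over/going-up theorem which describes fibres of $f^*$ in terms of primes containing given ideals. Because $B$ is an affine $\C$-algebra that is integral over $A$ and finitely generated as a $\C$-algebra (hence as an $A$-algebra), it is finite as an $A$-module. Then any finite generating set of $M$ over $B$, together with a finite generating set of $B$ over $A$, yields a finite $A$-generating set of $M$, so the $A$-support is defined.

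Next I would compute the $A$-annihilator in terms of the $B$-annihilator. Since the $A$-action on $M$ is obtained by restriction via $f$, an element $a \in A$ acts as zero on $M$ iff $f(a)$ acts as zero on $M$, so $\ann_A(M) = f^{-1}(\ann_B(M))$. Consequently
\[
\supp_A(M) = V\bigl(f^{-1}(\ann_B M)\bigr).
\]
To match this with $f^*(\supp_B M) = f^*(V(\ann_B M))$, I would invoke the standard consequence of lying-over and going-up for the integral extension $f: A \inj B$: for any ideal $J \subseteq B$, the set of primes of $A$ containing $f^{-1}(J)$ coincides with the image under $f^*$ of the set of primes of $B$ containing $J$. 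Applied to $J = \ann_B(M)$ this gives $\supp_A(M) = f^*(\supp_B M)$, which is the first asserted equality.

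For the maximal-spectrum statement, I would use the other classical property of integral extensions: a prime $\fq \subset B$ is maximal if and only if $f^{-1}(\fq) \subset A$ is maximal (equivalently, $B/\fq$ is a field iff $A/(f^{-1}\fq)$ is). Intersecting the previous equality with $\Spec(A)$ on the left and observing that $f^*$ sends $\Spec(B)$ into $\Spec(A)$ and, conversely, every maximal ideal of $A$ in $\supp_A(M)$ lifts to a maximal ideal of $B$ in $\supp_B(M)$ (again by going-up combined with the maximality equivalence), yields
\[
\Spec(A) \cap \supp_A(M) \;=\; f^*\bigl(\Spec(B) \cap \supp_B(M)\bigr).
\]

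There is no genuine obstacle; the only thing to be careful about is making sure the two equivalent formulations of $\supp_A(M)$ used in the paper (the vanishing locus of $\ann_A(M)$ and that of the Fitting ideal $E_0(M)$) agree, which they do once $M$ is finitely generated over $A$, so the reduction to the annihilator version is legitimate. The substantive input is entirely the lying-over/going-up package for integral ring extensions.
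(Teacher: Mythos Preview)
Your proposal is correct and follows essentially the same route as the paper: both compute $\ann_A(M)=f^{-1}(\ann_B M)$ and then invoke lying-over (phrased in the paper as surjectivity of $\spec$ for the induced integral extension $A/\ann_A M \inj B/\ann_B M$) together with the preservation of maximality under integral extensions. You are simply more explicit about the finite generation of $M$ over $A$ and the annihilator/Fitting-ideal equivalence, which the paper leaves implicit.
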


\begin{proof}
The reader may consult \cite[Chapter 4]{E}, for background on integral
extensions. Clearly, $\ann_A (M)= A\cap \ann_B (M)$, and the extension $\bar{f}:
A/\ann_A (M) \inj B/\ann_B (M)$ is again integral. The inclusion $f^*(\supp_B
(M))\subseteq \supp_A (M)$  follows from the definitions. For the other
inclusion, pick any prime ideal $\fp$ containing $\ann_A (M)$. Since $\bar{f}$ 
induces a surjection on prime spectra, there is a prime ideal $\fq$ containing
$\ann_B (M)$, such that $f^*(\fq)=\fp$, and $\fq$ is maximal, if $\fp$ is
maximal. 
\end{proof}

The interpretation \eqref{eq:supv} for the closed points of the support of
Alexander-type invariants leads to the following key nilpotence test.

\begin{lemma}
\label{lem:niltest}
Let $\nu : G\surj H$ be a group epimorphism with finitely generated source,
abelian image and kernel $N$. Then the following are equivalent.
\begin{enumerate}

\item \label{nil1}
The $\C H$-module $H_1 N$ is nilpotent.

\item \label{nil2}
The inclusion $\Spec (\C H) \cap \supp_{\C H} (H_1 N) \subseteq \{1\}$ holds.

\item \label{nil3}
The intersection $\nu^* (\T (H)) \cap \V^1_1 (G)$ is contained in $\{ 1\}$.

\end{enumerate}
\end{lemma}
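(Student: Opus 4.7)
The plan is to prove the cycle $(1) \Rightarrow (2) \Rightarrow (3) \Rightarrow (1)$, though in practice $(1) \Leftrightarrow (2)$ is a purely commutative algebra fact and $(2) \Leftrightarrow (3)$ is a direct translation using equation~\eqref{eq:supv}. Since $G$ is finitely generated and $H$ is abelian, $H = G/N$ is finitely generated abelian, so $\C H$ is a Noetherian affine $\C$-algebra and $H_1 N$ is a finitely generated $\C H$-module (as noted in Section~\ref{sec:compl}). Also, $\T(H) = \Spec(\C H)$ as algebraic varieties, and the augmentation ideal $I_H \subset \C H$ corresponds to the identity character $1 \in \T(H)$.

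For $(1) \Leftrightarrow (2)$: by definition $H_1 N$ is nilpotent over $\C H$ iff $I_H^q \subseteq \ann_{\C H}(H_1 N)$ for some $q$, which is equivalent to $I_H \subseteq \sqrt{\ann_{\C H}(H_1 N)}$. Since $\C H$ is a finitely generated $\C$-algebra, Hilbert's Nullstellensatz (equivalently, Jacobson's theorem for Jacobson rings) identifies the radical of an ideal with the intersection of the maximal ideals containing it, so the previous inclusion is equivalent to $V(\ann_{\C H}(H_1 N)) \cap \Spec(\C H) \subseteq V(I_H) \cap \Spec(\C H) = \{1\}$. Since $\supp_{\C H}(H_1 N) = V(\ann_{\C H}(H_1 N)) = V(E_0(H_1 N))$, this is exactly condition $(2)$.

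For $(2) \Leftrightarrow (3)$: apply \eqref{eq:supv} to the epimorphism $\nu: G \surj H = G/N$, which gives the identification
\[
\Spec(\C H) \cap \supp_{\C H}(H_1 N) \;\equiv\; \im(\nu^*) \cap \V^1_1(G)
\]
away from $1$. Since $\nu^* : \T(H) \inj \T(G)$ identifies $\T(H)$ with $\im(\nu^*)$, the subset $\nu^*(\T(H)) \cap \V^1_1(G)$ corresponds exactly to $\Spec(\C H) \cap \supp_{\C H}(H_1 N)$ under $\nu^*$, and $1 \in \Spec(\C H)$ corresponds to $1 \in \T(G)$. Therefore the condition "contained in $\{1\}$" transfers literally across the bijection, giving $(2) \Leftrightarrow (3)$.

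The only delicate point is ensuring the two formulations of nilpotence agree at the level of closed points: this is where finite generation of $H_1 N$ over $\C H$ and the Nullstellensatz combine, guaranteeing that inclusion of maximal spectra (rather than all primes) suffices to detect inclusion of radicals. No step here is really an obstacle, since Noetherianness of $\C H$, finite generation of $H_1 N$, and the previously established identity \eqref{eq:supv} do all the work; the lemma is essentially a dictionary translating nilpotence into support and then into characteristic varieties.
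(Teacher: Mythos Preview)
Your proof is correct and follows essentially the same route as the paper's own argument: the equivalence $(1)\Leftrightarrow(2)$ via the Nullstellensatz (after noting that $1\in\T(H)$ corresponds to the maximal ideal $I_H$), and $(2)\Leftrightarrow(3)$ directly from \eqref{eq:supv}. You have simply unpacked the details that the paper leaves implicit.
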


\begin{proof}
Note that $1\in \T (H)$ corresponds to the maximal ideal $I_H \subseteq \C H$.
With this remark, the equivalence \eqref{nil1} $\eqv$ \eqref{nil2} becomes an
easy consequence of the Hilbert Nullstellensatz. The equivalence \eqref{nil2}
$\eqv$ \eqref{nil3} follows directly from \eqref{eq:supv}.
\end{proof}

For a group $G$, we denote by $p_G : G \surj G_{\abf}$ the canonical projection.
Assume $G$ is finitely generated and fix an integer $m\ge 1$.  Denoting by
$\iota_m : G_{\abf} \inj G_{\abf}$ the multiplication by $m$, note that its
extension to group algebras, $\C \iota_m : \C G_{\abf} \inj \C  G_{\abf}$, is
integral, and the associated map on maximal spectra, $\C \iota_m^*: \T
(G_{\abf}) \to \T (G_{\abf})$, is the $m$-power map of the character group
$\T (G_{\abf})$. 

Let $p_m : G(m) \surj G_{\abf}$ be the pull-back of $p_G$ via $\iota_m$.
Clearly, $G(m)$ is a normal, finite index subgroup of $G$ containing the Johnson
kernel $K_G$, with inclusion denoted  $\varphi_m : G(m) \inj G$, and
\begin{equation} 
\label{eq:pull} 
p_G \circ \varphi_m =\iota_m \circ p_m \, .
\end{equation}

\begin{lemma}
\label{lem:samek}
Assume that all finite index subgroups of $G$ containing $K_G$ have the same
first Betti number. Then the following hold.

\begin{enumerate}

\item \label{331}
The map induced by $\varphi_m$ on $I$-adic completions,
$\widehat{B(\varphi_m)}:  \widehat{B(G(m))} \isom \widehat{B(G)}$, is 
a filtered isomorphism.

\item \label{332}
The inclusion $K_{G(m)} \subseteq K_G$ is actually an equality. 

\end{enumerate}
\end{lemma}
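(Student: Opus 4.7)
The plan is to treat parts \eqref{331} and \eqref{332} separately, both leveraging a single consequence of the standing hypothesis: since $G(m)$ is a finite index subgroup of $G$ containing $K_G$ (its index in $G$ equals $[G_{\abf}:mG_{\abf}]$, which is finite because $G_{\abf}$ is finitely generated), the hypothesis supplies $b_1(G(m)) = b_1(G)$.

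For part \eqref{331}, I would verify the one input needed by Proposition \ref{prop:bhatiso}, namely that $\varphi_m$ induces an isomorphism on $H_1(-,\C)$. Since $\varphi_m$ is a finite index inclusion and we work with $\C$-coefficients, the transfer map makes $(\varphi_m)_* : H_1(G(m),\C) \to H_1(G,\C)$ surjective; the Betti number equality then upgrades this to an isomorphism between finite-dimensional spaces of equal dimension. Applying Proposition \ref{prop:bhatiso} delivers the filtered isomorphism of $I$-adic completions.

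For part \eqref{332}, the strategy is to unpack the definition $G(m)=p_G^{-1}(mG_{\abf})$ and carefully track abelianizations. This gives $K_G\subseteq G(m)$ with quotient $G(m)/K_G\cong mG_{\abf}\cong G_{\abf}$, which is torsion-free abelian; hence $G(m)'\subseteq K_G$ and the surjection $G(m)\surj G_{\abf}$ (with kernel $K_G$) factors through $G(m)_{\ab}$. The main step is a rank comparison: the hypothesis yields $\rank G(m)_{\ab}=b_1(G(m))=b_1(G)=\rank G_{\abf}$, so the induced surjection $G(m)_{\ab}\surj G_{\abf}$ has kernel consisting of pure torsion. Passing to torsion-free quotients identifies $G(m)_{\abf}$ with $G_{\abf}$, and the quotient map $G(m)\surj G(m)_{\abf}$ is the restriction of $p_G$ to $G(m)$; reading off its kernel gives $K_{G(m)}=K_G$. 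The step I expect to demand the most care is this rank comparison, which must cleanly thread the four potentially distinct abelian groups $G_{\ab}$, $G_{\abf}$, $G(m)_{\ab}$, $G(m)_{\abf}$; once the hypothesis is invoked in the correct place, the rest reduces to routine bookkeeping in finitely generated abelian groups.
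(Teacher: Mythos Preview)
Your proposal is correct and follows essentially the same route as the paper. For part~\eqref{331} the paper likewise reduces to Proposition~\ref{prop:bhatiso} after noting that the hypothesis forces $(\varphi_m)_1$ to be an isomorphism; for part~\eqref{332} the paper applies the functor $\abf$ to the square \eqref{eq:pull} to see that $\abf(p_m)\colon G(m)_{\abf}\to G_{\abf}$ is a rational isomorphism (hence an isomorphism), which is exactly your rank comparison phrased more compactly---one small wording fix: the restriction $p_G|_{G(m)}$ lands in $mG_{\abf}$, so it is $p_m$ (equivalently $p_G|_{G(m)}$ composed with $\iota_m^{-1}$) that you should identify with $p_{G(m)}$ after your isomorphism $G(m)_{\abf}\cong G_{\abf}$.
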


\begin{proof}
Our assumption implies that $\varphi_m$ induces an isomorphism $H_1 G(m) \isom
H_1 G$. Property \eqref{331} follows then from  Proposition \ref{prop:bhatiso}. 
The second claim is a consequence of the fact that $p_m$ may be identified with
$p_{G(m)}$. To obtain this identification, we apply to \eqref{eq:pull} the
functor $\abf$. By construction, $\abf (p_G)$ is an isomorphism and $\abf
(\iota_m)=\iota_m$ is a rational isomorphism. We also know that $\abf
(\varphi_m) \otimes \Q : H_1 (G(m), \Q) \isom H_1 (G, \Q)$ is an isomorphism. We
infer that $\abf (p_m)$ is  a rational isomorphism, hence an isomorphism.
\end{proof}

{\bf Proof of Theorem \ref{thm:main1} (except for the non-triviality
assertion).} We first prove that $\V(T_g) = \{1\}$.  According to a recent
result of Putman \cite{P}, the group $G=T_g$ satisfies for $g\ge 3$  the
hypothesis of Lemma \ref{lem:samek}. Denote by $\psi: G' \inj K_G$ and $\psi_m :
G(m)' \inj K_{G(m)} =K_G$ the inclusions. According to Proposition
\ref{prop:bkiso},  they induce filtered isomorphisms,  $\widehat{B(G)} \isom
\widehat{H_1 K_G}$ and $\widehat{B(G(m))} \isom \widehat{H_1 K_G}$, between the
corresponding $I$-adic completions. In these isomorphisms, the $\C
G_{\abf}$-module  structure of  $M=H_1 K_G$ comes from the group extension
associated to $p_G$, respectively $p_m$. Denote the second  $\C G_{\abf}$-module
by ${}^m M$, and note that ${}^m M$ is obtained from $M$ by restriction of
scalars, via $\C \iota_m : \C G_{\abf} \inj \C  G_{\abf}$. 

Taking into account the isomorphism from Lemma \ref{lem:samek}\eqref{331}, it
follows that  $\id_M: {}^m M \to M$, viewed as a $\C \iota_m$-equivariant map,
induces an isomorphism between the corresponding $I$-adic completions. 

Let $\rho \in \T (G_{\abf})$ be a closed point of $\supp_{\C G_{\abf}} (M)$. By
\eqref{eq:supv}, applied to $N=K_G$, $\C \iota_m^* (\rho)=1$, since $\V (G)$
consists of  $m$-torsion points, for $g\ge 4$. We infer from Lemma
\ref{lem:supp}, applied to $f= \C \iota_m : \C G_{\abf} \inj \C  G_{\abf}$,
that   $\Spec (\C G_{\abf}) \cap \supp_{\C G_{\abf}} ({}^m M) \subseteq \{
1\}$.  Take $\nu =p_m : G(m) \surj G_{\abf}$ in Lemma \ref{lem:niltest}, whose
kernel is $K_{G(m)} =K_G$. We deduce that ${}^m M$ is nilpotent over $\C
G_{\abf}$, that is, $I^q \cdot {}^m M =0$ for some $q$, where $I\subseteq \C
G_{\abf}$ is the augmentation ideal. 

Denote by $\kk_m : M\to \widehat{{}^m M}$ and $\kk: M\to \widehat{M}$ the
completion maps, with kernels  $\cap_{r\ge 0} I^{r} \cdot {}^m M$ and 
$\cap_{r\ge 0} I^{r} \cdot M$. It follows from naturality of completion that 
$\kk$ is injective, since $\kk_m$ is injective and $\widehat{\id_M}:
\widehat{{}^m M} \isom \widehat{M}$ is an isomorphism. 

We also know from \cite{DP} that $\dim_{\C} M<\infty$. It follows that the
$I$-adic filtration of $M$  stabilizes to $I^{q} \cdot M = \cap_{r\ge 0} I^{r}
\cdot M= 0$, for $q$ big enough.   Applying Lemma \ref{lem:niltest} to $\nu =p_G
: G \surj G_{\abf}$, with kernel $K_G$, we infer that $\V (G)= \{ 1\}$.

\medskip 

We extract from the preceding argument the following corollary. Together with
the triviality of $\V(T_g)$, this completes the proof of Theorem~\ref{thm:main1}
(except for the non-triviality assertion) via Remark \ref{rem=restr}.

\begin{corollary}
\label{cor:halfm2}
If $g\ge 4$, then $H_1(K_g, \C)$ is a nilpotent module,
over both $\C (T_g)_{\ab}$ and $\C (T_g)_{\abf}$. 
\end{corollary}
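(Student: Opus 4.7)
The plan is to extract the nilpotence statement as a clean consequence of the triviality of $\V(T_g)$ that was just established. With $G = T_g$ for $g\ge 4$, apply Lemma \ref{lem:niltest} to the canonical projection $\nu = p_G : T_g \surj (T_g)_{\abf}$, whose kernel is precisely $K_G = K_g$. The induced map $\nu^{\ast} : \T((T_g)_{\abf}) \inj \T(T_g)$ identifies $\T((T_g)_{\abf})$ with the identity component $\T^0(T_g)$, so the intersection $\nu^{\ast}(\T((T_g)_{\abf})) \cap \V^1_1(T_g)$ is exactly $\V(T_g)$. Since $\V(T_g) = \{1\}$, condition (iii) of Lemma \ref{lem:niltest} holds, and its equivalence with condition (i) gives that $H_1(K_g, \C)$ is a nilpotent $\C (T_g)_{\abf}$-module.

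To upgrade nilpotence from $\C (T_g)_{\abf}$ to $\C (T_g)_{\ab}$, invoke Remark \ref{rem=restr} applied to the canonical epimorphism $(T_g)_{\ab} \surj (T_g)_{\abf}$ (which quotients out the $2$-torsion in $H_1(T_g)$ identified by Johnson). The $\C (T_g)_{\ab}$-module structure on $H_1(K_g, \C)$ is exactly the one obtained by restriction of scalars from its natural $\C (T_g)_{\abf}$-module structure, so the remark yields that nilpotence over either ring is equivalent. This completes the proof of the corollary.

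There is essentially no obstacle in the corollary itself: the genuine work was already carried out in the preceding paragraphs (triviality of $\V(T_g)$, via Theorem \ref{thm:bref}, Putman's theorem, and the trick of passing to $G(m)$ to kill the torsion points through the $m$-th power map on the character torus). The corollary is merely a translation of that triviality through the nilpotence test of Lemma \ref{lem:niltest} together with the restriction-of-scalars observation in Remark \ref{rem=restr}.
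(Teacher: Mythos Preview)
Your argument is correct and uses the same two ingredients the paper does (Lemma~\ref{lem:niltest} applied to $\nu=p_G$, and Remark~\ref{rem=restr} for the passage between $\C(T_g)_{\abf}$ and $\C(T_g)_{\ab}$). The only cosmetic difference is the direction in which you traverse the equivalence of Lemma~\ref{lem:niltest}: in the paper's preceding argument nilpotence of $M=H_1K_g$ is obtained \emph{first} (from the injectivity of $\kk$ and $\dim_\C M<\infty$, giving $I^q\cdot M=0$), and then $\V(T_g)=\{1\}$ is deduced via $(\ref{nil1})\Rightarrow(\ref{nil3})$; you instead take $\V(T_g)=\{1\}$ as already established and read off nilpotence via $(\ref{nil3})\Rightarrow(\ref{nil1})$. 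Since Lemma~\ref{lem:niltest} is an equivalence, either reading is a legitimate ``extraction from the preceding argument''.
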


\section{Infinitesimal Alexander invariant} \label{sec:infalex}

Our next task is to prove Theorem \ref{thm:main2} and the non-triviality
assertion of Theorem~\ref{thm:main1}. These follow from general results about
infinitesimal Alexander invariants. 

Let $\cH_{\bullet}$ be a positively graded Lie algebra. Consider
the exact sequence of graded Lie algebras
\begin{equation}
\label{eq:definf}
0\to \cH'_{\bullet}/\cH''_{\bullet} \rightarrow \cH_{\bullet}/\cH''_{\bullet} 
\rightarrow \cH_{\bullet}/\cH'_{\bullet} \to 0 \, .
\end{equation}
The universal enveloping algebra of the abelian Lie algebra
$\cH_{\bullet}/\cH'_{\bullet}$ is the graded polynomial algebra  $\Sym_{\bullet}
(\cH_{\ab})$. (When the Lie algebra $\cH_{\bullet}$  is generated by $\cH_{1}$,
$\Sym_{\bullet} (\cH_{\ab})= \Sym_{\bullet} (\cH_{1})$,  with the usual
grading.) The adjoint action in \eqref{eq:definf} yields a natural graded 
$\Sym_{\bullet} (\cH_{\ab})$-module structure on $\cH'_{\ab}$. It will be
convenient to shift degrees  and define the {\em infinitesimal Alexander
invariant} $\bb_{\bullet} (\cH) := \cH'_{\ab}[2]$, by analogy with the Alexander
invariant of a group. The graded vector space  $\bb_{\bullet} (\cH) =
\oplus_{q\ge 0} \bb_{q} (\cH)$, where $\bb_{q} (\cH)= \cH'_{q+2}/\cH''_{q+2}$,
becomes in this way a graded module over $\Sym_{\bullet} (\cH_{\ab})$. 

When $\cH_{\bullet}= \cG_{\bullet}(G)$, we denote the graded $\Sym_{\bullet}
(G_{\ab} \otimes \C)$-module $\bb_{\bullet} (\cH)$ by  $\bb_{\bullet} (G)$. Note
that the {\em degree filtration} of $\bb_{\bullet} (G)$, $\{ \bb_{\ge q} (G)
\}_{q\ge 0}$, coincides with its $(G_{\ab} \otimes \C)$-adic filtration, where 
$(G_{\ab} \otimes \C)$ is the ideal of $\Sym (G_{\ab} \otimes \C)$ generated by
$G_{\ab} \otimes \C$.

The infinitesimal Alexander invariant, introduced and studied in \cite{PS0}, is
functorial in the following sense. A graded Lie map $f: \cH \to \cK$ obviously
induces a degree zero map  $\bb_{\bullet} (f): \bb_{\bullet}(\cH) \to
\bb_{\bullet}(\cK)$, equivariant with respect to the graded algebra map $\Sym
(f_{\ab}): \Sym (\cH_{\ab}) \to \Sym (\cK_{\ab})$.  

Let $\bL_{\bullet}(V)$ be the free graded Lie algebra on a finite-dimensional
vector space $V$, graded by bracket length. Use the Lie bracket to identify
$\bL_{2}(V)$ and $\wedge^2 V$. For a subspace $R\subseteq \wedge^2 V$, consider
the (quadratic) graded Lie algebra $\cG= \bL (V)/\; \text{ideal}\; (R)$, with
grading inherited from $\bL_{\bullet}(V)$. 
Denote by  $\iota :R\inj \wedge^2 V$  the inclusion.

Theorem 6.2 from \cite{PS0} provides the following finite, free
$\Sym_{\bullet}(V)$-presentation  for the infinitesimal Alexander invariant: 
$\bb_{\bullet} (\cG) =\coker ({\nabla})$, where
\begin{equation}
\label{eq:presinf}
\nabla:= \id \otimes \iota + \delta_3 \colon \Sym_{\bullet}(V) \otimes
(R\oplus \wedge^3 V) \rightarrow \Sym_{\bullet}(V) \otimes \wedge^2 V\, ,
\end{equation}
$R$, $\wedge^3 V$ and $\wedge^2 V$ have degree zero, and the $\Sym (V)$-linear
map $\delta_3$ is given by $\delta_3 (a\wedge b\wedge c)= a\otimes b\wedge c +
b\otimes c\wedge a + c\otimes a\wedge b$, for $a,b,c\in V$. 

We begin by simplifying the presentation \eqref{eq:presinf}. To this end, let
$\beta: \wedge^2 \cG_1 \surj \cG_2$ be the Lie bracket.

\begin{lemma}
\label{lem:simpl}
For any quadratic graded Lie algebra $\cG$, $\bb_{\bullet} (\cG)= \coker
(\overline{\nabla})$,  as graded $\Sym (V)$-modules,  where the $\Sym
(V)$-linear map
\[
\overline{\nabla}: \Sym (V) \otimes  \wedge^3 \cG_1 \rightarrow \Sym (V)
\otimes \cG_2
\]
is defined by $\overline{\nabla}= (\id \otimes \beta) \circ \delta_3$. 
\end{lemma}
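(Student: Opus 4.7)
The plan is to manipulate the presentation \eqref{eq:presinf} by ``absorbing'' the $R$-component into the target. The key observation is that for a quadratic Lie algebra $\cG = \bL(V)/\text{ideal}(R)$, the degree-one piece is $\cG_1 = V$ and the degree-two piece is $\cG_2 = \wedge^2 V / R$, so the bracket $\beta \colon \wedge^2 \cG_1 = \wedge^2 V \surj \cG_2$ is just the quotient map. Consequently $\wedge^3 \cG_1 = \wedge^3 V$, and the map $\overline{\nabla}$ in the statement has the same domain $\Sym(V) \otimes \wedge^3 V$ as the second summand in the domain of $\nabla$.

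Next I would unpack $\im(\nabla)$. Since $\nabla$ acts on the direct sum $R \oplus \wedge^3 V$ by the respective components, we have
\[
\im(\nabla) = (\Sym(V) \otimes \iota(R)) + \im(\delta_3) \subseteq \Sym(V) \otimes \wedge^2 V.
\]
Tensoring the short exact sequence $0 \to R \stackrel{\iota}{\to} \wedge^2 V \stackrel{\beta}{\to} \cG_2 \to 0$ with the flat $\C$-module $\Sym(V)$ produces a short exact sequence of $\Sym(V)$-modules whose first map is $\id \otimes \iota$ and whose kernel-cokernel identification gives
\[
\frac{\Sym(V) \otimes \wedge^2 V}{\Sym(V) \otimes \iota(R)} \;\cong\; \Sym(V) \otimes \cG_2,
\]
with the isomorphism induced by $\id \otimes \beta$.

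The proof then concludes by a two-step quotient: quotienting first by $\Sym(V) \otimes \iota(R)$ and then by the image of $\delta_3$ (pushed forward) yields
\[
\coker(\nabla) = \frac{\Sym(V) \otimes \wedge^2 V}{(\Sym(V)\otimes \iota(R)) + \im(\delta_3)} \;\cong\; \frac{\Sym(V) \otimes \cG_2}{\im\bigl((\id\otimes\beta)\circ\delta_3\bigr)} = \coker(\overline{\nabla}),
\]
and by Theorem 6.2 of \cite{PS0} the left-hand side is $\bb_{\bullet}(\cG)$. All maps are $\Sym(V)$-linear and preserve the natural gradings, so the isomorphism holds in the category of graded $\Sym(V)$-modules. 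There is no genuine obstacle here: the lemma is a clean bookkeeping simplification, and the only point requiring minor care is verifying that the grading conventions on $\wedge^3 V$ and $\cG_2$ (both concentrated in degree zero on the module side, with the usual grading on $\Sym(V)$) are consistent with the degree shift built into $\bb_{\bullet}$.
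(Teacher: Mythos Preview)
Your proof is correct and follows exactly the approach indicated in the paper: the authors simply assert that the degree-zero $\Sym(V)$-linear map $\id\otimes\beta$ induces an isomorphism $\coker(\nabla)\isom\coker(\overline{\nabla})$, calling the verification ``straightforward to check.'' You have spelled out precisely that straightforward check via the two-step quotient, so there is nothing to add.
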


\begin{proof}
It is straightforward to check that the degree zero $\Sym (V)$-linear map $\id
\otimes\beta$  induces an isomorphism $\coker (\nabla) \isom \coker
(\overline{\nabla})$.
\end{proof}

{\bf Proof of Theorem \ref{thm:gral}.} In Lemma \ref{lem:niltest}, let $\nu$ be
the canonical projection $G \surj G_{\abf}$, with kernel $K_G$. By our
hypothesis on $\V (G)$ and Remark \ref{rem=restr},  we infer that the module
$H_1 K_G$ is nilpotent, over both $\C G_{\abf}$ and $\C G_{\ab}$. Therefore,
$\dim_{\C} H_1 K_G<\infty$ (since $H_1K_G$ is finitely generated over $\C
G_{\abf}$) and the $I_{G_{\ab}}$-adic completion map
\begin{equation}
\label{eq:griso1}
H_1 K_G \isom \widehat{H_1 K_G}
\end{equation}
is a filtered isomorphism. Proposition \ref{prop:bkiso} provides another
filtered isomorphism,
\begin{equation}
\label{eq:griso2}
\widehat{B(G)}\isom \widehat{H_1 K_G}\, ,
\end{equation}
between $I_{G_{\ab}}$-adic completions. 
A third filtered isomorphism is a consequence of our assumption on $\M (G)$:
\begin{equation}
\label{eq:griso3}
\widehat{B(G)}\isom \widehat{\bb_{\bullet} (G)}\, ,
\end{equation}
where the completion of $\bb_{\bullet} (G)$ is taken with respect to the degree
filtration; see \cite[Proposition 5.4]{DPS}.  Since $\dim_{\C}
\widehat{\bb_{\bullet} (G)}<\infty$,  we deduce that  $\dim_{\C} \bb_{\bullet}
(G)<\infty$.  Hence, the degree filtration is finite, and the completion map
\begin{equation}
\label{eq:griso4}
\bb_{\bullet} (G) \isom  \widehat{\bb_{\bullet} (G)}
\end{equation}
is a filtered isomorphism. 

By construction, the isomorphism \eqref{eq:griso1} is equivariant with respect
to the $I_{G_{\ab}}$-adic completion homomorphism, $\C G_{\ab} \to \widehat{\C
G_{\ab}}$. Again by construction, the isomorphism \eqref{eq:griso2} is
$\widehat{\C G_{\ab}}$-linear. By Proposition 5.4 from \cite{DPS}, the
isomorphism \eqref{eq:griso3} is $\widehat{\exp}$-equivariant, where
$\widehat{\exp} \colon \widehat{\C G_{\ab}} \isom \widehat{\Sym (G_{\ab}\otimes
\C)}$ is the identification \eqref{eq=expiso}. Since the degree filtration of
$\bb_{\bullet} (G)$ coincides with its $(G_{\ab} \otimes \C)$-adic filtration,
as noted earlier,  the isomorphism \eqref{eq:griso4} is plainly equivariant 
with respect to the $(G_{\ab} \otimes \C)$-adic completion homomorphism,  $\Sym
(G_{\ab}\otimes \C) \to \widehat{\Sym (G_{\ab}\otimes \C)}$. Putting these facts
together, we deduce from \eqref{eq:griso1}-\eqref{eq:griso4} that the natural
$\Sym (G_{\ab}\otimes \C)$-module structure of the nilpotent $\C G_{\ab}$-module
$H_1 K_G$, explained in the Introduction, is isomorphic to $\bb_{\bullet} (G)$
over $\Sym (G_{\ab}\otimes \C)$, as stated in Theorem \ref{thm:gral}.

To finish the proof of Theorem \ref{thm:gral}, we have to show that
$I_{G_{\ab}}^q\cdot H_1 K_G =0$ if and only if $\bb_{q}(G)=0$, for any $q\ge 0$.
This assertion will follow from the easily checked remark that, given a vector
space $M$ endowed with a decreasing Hausdorff filtration  $\{ F_r \}_{r\ge 0}$ (
i.e., $\cap_r F_r =0$), $F_q=0$ if and only if $\gr_{\ge q}(M)=\oplus_{r\ge q}
F_r/F_{r+1} =0$. Plainly, all maps \eqref{eq:griso1}-\eqref{eq:griso4} induce
isomorphisms at the associated graded level, and all filtrations are Hausdorff.
We deduce that $I_{G_{\ab}}^q\cdot H_1 K_G =0$ if and only if $\bb_r (G)=0$ for
$r\ge q$. Since $\bb_{\bullet} (G)$ is generated in degree zero over $\Sym
(G_{\ab}\otimes \C)$, this is equivalent to $\bb_q (G)=0$. The proof of Theorem
\ref{thm:gral} is complete.  \hfill $\square$

\medskip

{\bf Proof of the non-triviality assertion of Theorem~\ref{thm:main1}.} The
group $G=T_g$ satisfies the hypotheses of Theorem \ref{thm:gral} when $g\ge 4$.
Consequently, if $H_1 K_g$ is a trivial $\C (T_g)_{\ab}$-module, then $\bb_1
(T_g)= \cG_3 (T_g)=0$.  This implies that $\cG_{\ge 3} (T_g)=0$, since the Lie
algebra $\cG_{\bullet} (T_g)$ is generated in degree $1$. In particular,
$\dim_{\C} \cG_{\bullet} (T_g)< \infty$, which contradicts Proposition 9.5 from
\cite{H}. \hfill $\square$

\medskip

For the proof of Theorem \ref{thm:main2}, we need to recall the
main result of Hain from \cite{H}, that gives an explicit presentation of the
graded Lie algebra $\cG_{\bullet} (T_g)$, for $g\ge 6$, in 
representation-theoretic terms. For representation theory, we follow the
conventions from Fulton and Harris \cite{FH},  like in \cite{H}.

The conjugation action in \eqref{eq:deft} induces an action of $\Sp_g (\Z)$ on
$\cG_{\bullet} (T_g)$, by graded Lie algebra automorphisms. By Johnson's work,
the $\Sp_g (\Z)$-action on $\cG_{1} (T_g)$ extends to an irreducible rational
representation of $\Sp_g (\C)$. It follows that the $\Sp_g (\Z)$-action on
$\cG_{\bullet} (T_g)$ extends to a degree-wise rational representation of $\Sp_g
(\C)$, by graded Lie algebra automorphisms. By naturality, the symplectic Lie
algebra $\sp_g (\C)$ acts on $\bb_{\bullet} (T_g)$.

The fundamental weights of $\sp_g (\C)$ are denoted $\lambda_1, \dots,
\lambda_g$.  The irreducible finite-dimensional representation with highest
weight $\lambda= \sum_{i=1}^g n_i \lambda_i$ is denoted $V(\lambda)$. By
Johnson's work, $\cG_1 (T_g)= V(\lambda_3):= V$. The irreducible decomposition
of the $\sp_g (\C)$-module  $\wedge^2 V(\lambda_3)$ is of the form $\wedge^2
V(\lambda_3) = R\oplus V(2\lambda_2)\oplus V(0)$, with all multiplicities equal
to $1$. For $g\ge 6$, $\cG_{\bullet} := \cG_{\bullet} (T_g)= \bL_{\bullet}
(V)/\; \text{ideal}\; (R)$,  as graded Lie algebras with $\sp_g (\C)$-action. In
particular, $\beta: \wedge^2 \cG_1 \surj \cG_2$ is  identified with the
canonical $\sp_g (\C)$-equivariant projection  $\wedge^2 V(\lambda_3)\surj 
V(2\lambda_2)\oplus V(0)$.

Set $V(0)= \C \cdot z$, $\tilde{R}= R+ \C \cdot z$, and denote by $\pi :
\wedge^2 V(\lambda_3)\surj  V(2\lambda_2)$ the canonical $\sp_g
(\C)$-equivariant projection. Note that both  $\id \otimes \pi : \Sym
(V(\lambda_3)) \otimes \wedge^2 V(\lambda_3) \to  \Sym (V(\lambda_3)) \otimes
V(2\lambda_2)$  and the map $\delta_3:  \Sym (V(\lambda_3)) \otimes \wedge^3
V(\lambda_3) \to \Sym (V(\lambda_3)) \otimes \wedge^2 V(\lambda_3)$  from
\eqref{eq:presinf} are $\sp_g (\C)$-linear. Consequently, 
\begin{equation}
\label{eq:tildepres}
\widetilde{\nabla}:= (\id \otimes \pi) \circ \delta_3 \colon
\Sym (V(\lambda_3)) \otimes \wedge^3 V(\lambda_3) \rightarrow
\Sym (V(\lambda_3)) \otimes V(2\lambda_2)
\end{equation}
is both $\Sym (V(\lambda_3))$-linear and $\sp_g (\C)$-equivariant. We are going
to view the $\sp_g (\C)$-trivial module $\C \cdot z$ as a trivial
$\Sym_{\bullet} (V(\lambda_3))$-module concentrated in degree zero, and assign
degree $0$ to both $\wedge^3 V(\lambda_3)$ and $V(2\lambda_2)$. 

Consider the canonical, $\sp_g (\C)$-equivariant graded Lie epimorphism
\begin{equation}
\label{eq:deff}
f: \cG_{\bullet}=  \bL_{\bullet} (V)/\; \text{ideal}\; (R) \surj 
\bL_{\bullet} (V)/\; \text{ideal}\; (\tilde{R})= \cK_{\bullet}\, .
\end{equation}

\begin{lemma}
\label{lem:tildesimpl}
The induced $\Sym (V)$-linear,  $\sp_g (\C)$-equivariant map $\bb_{\bullet} (f)$
is onto,  with $1$-dimensional kernel $\C \cdot z$.
\end{lemma}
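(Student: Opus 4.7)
The plan is to combine Lemma \ref{lem:simpl} with a snake-lemma chase, and then reduce the vanishing of the higher-degree kernel to a single representation-theoretic statement. Applying Lemma \ref{lem:simpl} to both $\cG$ and $\cK$ gives presentations $\bb_\bullet(\cG) = \coker(\overline{\nabla}_\cG)$ and $\bb_\bullet(\cK) = \coker(\overline{\nabla}_\cK) = \coker(\widetilde{\nabla})$, the last equality coming from $\beta_\cK = \pi$ (the $V(0)$-summand of $\wedge^2 V$ is killed by $\beta_\cK$). The canonical projection $\id\otimes\bar f : \Sym(V)\otimes\cG_2 \twoheadrightarrow \Sym(V)\otimes\cK_2$ is $\Sym(V)$-linear and $\sp_g(\C)$-equivariant, with kernel $\Sym(V)\otimes\C\cdot z$, and fits into a commutative square relating $\overline{\nabla}_\cG$ and $\widetilde{\nabla}$. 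A direct chase (or the snake lemma) then yields the surjectivity of $\bb_\bullet(f)$ together with
\[
\ker\bb_\bullet(f) \;\cong\; \bigl(\Sym(V)\otimes\C\cdot z\bigr)\Big/\bigl((\Sym(V)\otimes\C\cdot z)\cap\im\overline{\nabla}_\cG\bigr).
\]

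Since $\overline{\nabla}_\cG$ raises the $\Sym(V)$-degree by one, this intersection lies in $\Sym_{\ge 1}(V)\otimes\C\cdot z$, so the kernel in degree zero is exactly the required $\C\cdot z$. For strictly positive $\Sym(V)$-degree, the fact that $\im\overline{\nabla}_\cG$ is a $\Sym(V)$-submodule reduces the vanishing to a single degree-one inclusion $V\otimes\C\cdot z \subseteq \overline{\nabla}_\cG(\wedge^3 V)$: once this holds, multiplication by $\Sym^{n-1}(V)$ takes care of all higher symmetric degrees $n\ge 1$. By the very set-up of Lemma \ref{lem:simpl}, the quotient $V\otimes\cG_2\bigl/\overline{\nabla}_\cG(\wedge^3 V)$ equals $\cG_3$, and the image of $v\otimes z$ in this quotient is the bracket $[v,z]$. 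So the whole matter boils down to showing $[V,z]=0$ inside $\cG_3$.

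To finish, observe that $\ad(z) : V = V(\lambda_3)\to\cG_3$ is $\sp_g(\C)$-equivariant, as $z$ spans the invariant line $V(0)\subset\wedge^2 V$; by Schur's lemma its image is either zero or an irreducible copy of $V(\lambda_3)$. The main obstacle, and the only non-formal step, is to rule out the second alternative. I would handle this by invoking Hain's quadratic presentation of $\cG_\bullet(T_g)$ from \cite{H}: for $g\ge 6$ that presentation determines $\cG_3 = \cG_3(T_g)$ explicitly as an $\sp_g(\C)$-module, and a direct inspection of its irreducible decomposition shows that $V(\lambda_3)$ does not occur among the summands of $\cG_3$. This forces $[V,z]=0$, yielding the desired $1$-dimensional kernel $\C\cdot z$ and completing the proof.
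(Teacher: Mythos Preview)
Your argument is sound and runs parallel to the paper's, though packaged differently: you pass through the module presentations of Lemma~\ref{lem:simpl} and a snake-lemma chase, whereas the paper works directly in the Lie algebra, lifting an element of $\ker\bb_q(f)$ to a combination of iterated brackets $\overline{\ad_{v_1}\cdots\ad_{v_q}(z)}$ in $\cG$. Both routes collapse the positive-degree case to the single identity $[V,z]=0$ in $\cG_3$, so the only substantive difference is how that identity is justified.

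Here the paper is more economical: it simply cites from \cite{H} that the class of $z$ is \emph{central} in $\cG_\bullet=\bL_\bullet(V)/\text{ideal}(R)$, which gives $[V,z]=0$ at once. Your proposed justification---Schur's lemma together with the claim that $V(\lambda_3)$ does not occur in $\cG_3$---is a detour, and the decomposition claim is not innocent. A Newell--Littlewood check shows that $V(\lambda_3)$ \emph{does} appear (with multiplicity one) in $V(\lambda_3)\otimes V(2\lambda_2)$ in the stable range, so establishing that this copy dies in the quotient $\cG_3$ would itself require a nontrivial computation. Since the very reference \cite{H} you invoke already proves the centrality of $z$, the cleanest fix is to replace your final paragraph by that citation; this is exactly what the paper does.
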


\begin{proof}
The first three claims are obvious. It is equally clear that $\bb_0 (f)$ has
kernel $\C \cdot z$.  To prove injectivity in degree $q\ge 1$, start with the
class $\bar{x}$ of an arbitrary element $x\in \bL_{q+2} (V)$. If $\bb
(f)(\bar{x})=0$, then $x$ is equal, modulo $\bL'' (V)$, with a linear
combination of Lie monomials of the form $\ad_{v_1}\cdots
\ad_{v_{q}}(\tilde{r})$, with $\tilde{r}\in \tilde{R}$. 

Therefore, $\bar{x}$ belongs to the $\C$-span of elements of the form
$\overline{\ad_{v_1}\cdots \ad_{v_{q}}(z)}$. As shown in \cite{H}, the class of
$z$ is a central element of the Lie algebra $\bL_{\bullet} (V)/\; \text{ideal}\;
(R)$,  and so we are done, since $q\ge 1$.
\end{proof}

{\bf Proof of Theorem \ref{thm:main2}.} By Theorem \ref{thm:gral}, $H_1K_g= \bb
(\cG)$, over $\Sym (V)$, with $\cG$ as in \eqref{eq:deff}. By Lemma
\ref{lem:tildesimpl},  $\bb (\cG)= \bb (\cK) \oplus \C \cdot z$, as graded $\Sym
(V)$-modules, where $\C \cdot z$ is $\Sym (V)$-trivial (since $z$ is central in
$\cG$), with degree $0$. 

By Lemma \ref{lem:simpl}, the graded $\Sym (V)$-module $\bb_{\bullet} (\cG)$ has
presentation \eqref{eq:presintro}; see \eqref{eq:tildepres}. Note also that the
identification $\bb (\cG)= \bb (\cK) \oplus V(0)$ is compatible with the natural
$\sp_g (\C)$-symmetry of $\bb (\cG)= \bb (T_g)$.

It remains to prove the assertion about the action of $\G_g/K_g$ on
$H_1(K_g,\C)$. For this we use the theory of relative completion of mapping
class groups developed and studied in \cite{H}. Denote the completion of the
mapping class group with respect to the standard homomorphism $\G_g \to
\Sp_g(\C)$ by $\R(\G_g)$. Right exactness of relative completion implies that
there is an exact sequence
$$
\ccG(T_g) \to \R(\G_g) \to \Sp_g(\C) \to 1
$$
such that the diagram
$$
\xymatrix{
1 \ar[r] & T_g \ar[r]\ar[d] & \G_g \ar[r]\ar[d] & \Sp_g(\Z) \ar[r]\ar[d] & 1 \cr
 & \ccG(T_g) \ar[r] & \R(\G_g) \ar[r] & \Sp_g(\C) \ar[r] & 1
}
$$
commutes, where $\ccG(T_g)$ denotes the Malcev completion of $T_g$.

The conjugation action of $\G_g$ on $T_g$ induces an action of $\G_g$ on the
Malcev Lie algebra $\cG(T_g)$ of the Torelli group. Basic properties of relative
completion imply that this action factors through the natural homomorphism $\G_g
\to \R(\G_g)$. This action descends to an action of $\R(\G_g)$ on the Alexander
invariant $\bb(T_g)$ of $\cG(T_g)$. Its kernel contains the image of
$\ccG(T_g)'$ in $\R(\G_g)$. Basic facts about the Lie algebra of $\R(\G_g)$
given in \cite{H} imply that, when $g\ge 3$, $\R(\G_g)/\im \ccG(T_g)'$ is an
extension
$$
1 \to V \to \R(\G_g)/\im \ccG(T_g)' \to \Sp_g(\C) \to 1.
$$
Levi's theorem implies that this sequence is split. However, we have to choose
compatible splittings of the lower central series of $\cG(T_g)$ and this
sequence. The existence of such compatible splittings is a consequence of the
existence of the mixed Hodge structures on $\cG(T_g)$ and on the Lie algebra of
$\R(\G_g)$, and the fact that the weight filtration of $\cG(T_g)$ (suitably
renumbered) is its lower central series. Such compatible mixed Hodge structures
are determined by the choice of a complex structure on the reference surface
$\Sigma$. With such compatible splittings, one obtains a commutative diagram
$$
\xymatrix{
\G_g/K_g \ar[r] &
\R(\G_g)/\im \ccG(T_g)'\ar[d] & \Sp_g(\C)\ltimes V \ar[l]_(0.45)\simeq \ar[d]\cr
& \Aut(\bb(T_g)) & \Aut(\bb_\dot(T_g)) \ar[l]_\simeq
}
$$
when $g\ge 4$.
This completes the proof of Theorem \ref{thm:main2}. \hfill $\square$

\medskip

\begin{example}
\label{ex:kfree}
Let us examine the simple case when $G=F_n$, the non-abelian free group on $n$
generators. In this case, $H_1(K_G, \C)= B(G)\otimes \C$. Since $G$ is
$1$-formal, Theorem 5.6 from \cite{DPS} identifies the $I$-adic completion
$\widehat{H_1 K_G}$ with the degree completion  $\widehat{\bb_{\bullet} (\cG)}$,
where $\cG =\cG_{\bullet}(G)= \bL_{\bullet} (V)$, and $V=H_1 (F_n, \C)= \C^n$. 

On the other hand, $\V^1_1 (G) =\V^1_1 (G)\cap \T^0(G)= (\C^*)^n$ is infinite,
in contrast with the setup from Theorem \ref{thm:gral}. It follows from
Corollary 6.2 in \cite{PS} that  $\dim_{\C} H_1 K_G =\infty$. It is also
well-known  that $\dim_{\C} \bb_{\bullet} (\cG) =\infty$ when $n>1$.

This non-finiteness property of $\bb_{\bullet} (\cG)$ can be seen
concretely by using the exact Koszul complex, $\{ \delta_i : P_{\bullet}\otimes
\wedge^i V \to  P_{\bullet}\otimes \wedge^{i-1} V \}$, where $P_{\bullet}= \Sym
(V)$. Indeed, we infer from \eqref{eq:presinf} that, for every $q\ge 0$,
$$
\bb_q (\cG) = \coker (\delta_3 : P_{q-1}\otimes \wedge^3 V \to 
P_{q}\otimes \wedge^{2} V) \cong
\ker (\delta_1 : P_{q+1}\otimes V \surj  P_{q+2})
$$ 
has dimension $\binom{q+n}{q+2} (q+1)$, a computation that goes back to Chen's
thesis \cite{C}. Note also that each $\bb_q (\cG)$ is an $\sl_n (\C)$-module. It
turns out that these modules   are irreducible, as we now explain. 

Let $\{ \lambda_1, \dots, \lambda_{n-1}\}$ be the set of fundamental weights of
$\sl_n (\C)$ associated to the ordered basis $e_1,\dots,e_n$ of $V$, as in
\cite{FH}. One can easily check that, for each $q\ge 0$, the image $v$ of the
vector
$$
u= e_1^q \otimes ( e_1\wedge e_2) \in P_q\otimes \wedge^2 V
$$
in $\bb_q (\cG)$ is non-zero. Since $u$ is a highest weight vector of weight
$q\lambda_1 + \lambda_2$, it follows that $v$ generates a copy of the
irreducible $\sl_n (\C)$-module $V(q\lambda_1 + \lambda_2)$ in $\bb_q (\cG)$.
Since $\dim V(q\lambda_1 + \lambda_2) = \dim \bb_q (\cG)$, we conclude that
$$
\bb_q (\cG)= V(q\lambda_1 + \lambda_2).
$$
\end{example}

\begin{ack}
We thank the referee for suggestions and comments that resulted in an
improved exposition. The third author is grateful to the Max-Planck-Institut f\" ur Mathematik (Bonn),
where he completed this work, for hospitality and the excellent research atmosphere.
\end{ack}

\newcommand{\arxiv}[1]{{arXiv:#1}}

\bibliographystyle{amsplain}

\end{document}